\newcites{eigenes}{Eigene Publikationen}
\newcommand{\J}{\mathbf{J}}
\newcommand{\Ad}{\textup{Ad}}
\newcommand{\Rep}{\textup{Rep}}
\newcommand{\pidiff}{\pi_1^{\textup{diff}}}
\newcommand{\DE}{\textup{D.E.}}
\newcommand{\Soln}{\textup{Soln}}
\newcommand{\X}{\overline{X}}
\newcommand{\im}{\textup{im }}
\newcommand{\Q}{\mathbb{Q}}
\newcommand{\Z}{\mathbb{Z}}
\newcommand{\PP}{\mathbb{P}}
\newcommand{\F}{\mathscr{F}}
\newcommand{\IC}{\mathbb{C}}
\newcommand{\Gm}{\mathbb{G}_m}
\newcommand{\OO}{\mathcal{O}}
\newcommand{\LL}{\mathscr{L}}
\newcommand{\reg}{\textup{reg}}
\newcommand{\A}{\mathbb{A}}
\newcommand{\R}{\mathbb{R}}
\newcommand{\id}{\textup{id}}
\newcommand{\K}{\mathscr{K}}
\newcommand{\E}{\mathscr{E}}
\newcommand{\EE}{\mathscr{E}}
\newcommand{\END}{\mathscr{E}nd}
\newcommand{\M}{\mathcal{M}}
\newcommand{\El}{\textup{El}}
\newcommand{\Spec}{\textup{Spec}}
\newcommand{\Hom}{\textup{Hom}}
\newcommand{\End}{\textup{End}}
\newcommand{\KK}{\mathbb{C}(\!(t)\!)}
\newcommand{\KKu}{\mathbb{C}(\!(u)\!)}
\newcommand{\var}{\textup{var}}
\newcommand{\can}{\textup{can}}
\newcommand{\Hol}{\textup{Hol}}
\newcommand{\rk}{\textup{rk}}
\newcommand{\irr}{\textup{irr}}
\newcommand{\rig}{\textup{rig\,}}
\newcommand{\Weylz}{\mathbb{C}[z]\langle\partial_z\rangle}
\newcommand{\GL}{\textup{GL}}
\newcommand{\MC}{\textup{MC}}
\newcommand{\specialcell}[2][c]{%
  \begin{tabular}[#1]{@{}c@{}}#2\end{tabular}}
\newcommand{\Gdiff}{G_{\textrm{diff}}}
\newcommand{\Idiff}{I_{\textrm{diff}}}
\newcommand{\Gloc}{G_{\textrm{loc}}}
\theoremstyle{definition}
\newtheorem{defi}{Definition}[section]
\theoremstyle{plain}
\newtheorem{thm}[defi]{Theorem}
\newtheorem{lem}[defi]{Lemma}
\newtheorem{prop}[defi]{Proposition}
\theoremstyle{remark}
\newtheorem*{rem}{Remark}
\title{Classification of Rigid Irregular $G_2$-Connections}
\author{Konstantin Jakob}
\address{(K.J.) Massachusetts Institute of Technology, Department of Mathematics, Room 2-252B, 77 Massachusetts Ave, MA 02139}
\subjclass[2010]{20G41, 34M35}
\begin{document}
 
\maketitle

\begin{abstract}

Using the Katz-Arinkin algorithm we give a classification of
irreducible rigid irregular connections on a punctured $\PP^1_\IC$
having differential Galois group $G_2$, the exceptional simple
algebraic group, and slopes having numerator $1$. In addition to hypergeometric systems and their Kummer pull-backs we construct families of $G_2$-connections which are not of these types. 
	
\end{abstract}

% \newpage
% \tableofcontents
\section{Introduction}

%Rigid local systems classically arise as the solution sheaves of complex differential equations with regular singularities without accessory parameters. We say that a regular singular complex differential equation
%\[\sum_{i=0}^np_i(z)\partial_z^iy=0 \]
% with $p_i(z)\in \IC(z)$ and $p_n(z)=1$ is rigid if the following holds. 
%% 
%% has a regular singularity at a point $x\in \IC$ if the function $p_{n-i}(z)$ has a pole of order at most $i$ at $x$. We say that $\infty$ is a regular singular point of the equation if the limit $\lim_{z\rightarrow \infty} z^ip_i(z)$ exists for all $i=0,...,n$.  Assume that this is the case and let $F=(y_1,...,y_n)$ be a fundamental solution of this equation, i.e. the $y_i$ are  linearly independent scalar solutions spanning the solution space. One can for any singularity $x$ analytically continue $F$ along a simple loop $\gamma_x$ around $x$ and obtain another fundamental solution $\tilde{F}$ which is linearly related to $F$ by a matrix $M_{\gamma_x}$. Let $z_0$ be a point in $\PP^1(\IC)$ which is not a singularity of the above equation and denote by $S$ the set of singularities of the equation. 
%Let $S$ be the finite set of singularities of the equation,
%\[\rho:\pi_1(\PP^1(\IC)- S,z_0)\rightarrow \GL_n(\IC)\]
%the associated monodromy representation and $\gamma_x$ simple loops around the punctures $x\in S$. We say that the above equation (or the local system of solutions) is rigid if the Jordan canonical forms of the matrices $\rho(\gamma_x)$ determine the equation up to gauge equivalence. \par
Rigid local systems are local systems which are determined up to isomorphism by the conjugacy classes of their local monodromies. Classically they arise as solution sheaves of certain regular singular differential equations, e.g. the Gaussian hypergeometric equation. 
In his book \cite{Ka96} Katz explains how to study rigid local systems using middle convolution. He proves that any irreducible rigid local systen can be obtained from a local system of rank one by iterating middle convolution and twists with other local systems of rank one. This provides a tool for the construction of rigid local systems.

Using this machinery Dettweiler and Reiter classified rigid local systems with monodromy group of type $G_2$ in \cite{Dett10} where $G_2$ is the simple exceptional algebraic group. It can be thought of as a subgroup of $SO(7)$ stabilizing the Dickson alternating trilinear form. As a consequence they proved that there is a family of motives for motivated cycles with $G_2$ as motivic Galois group answering a question raised by Serre. Other applications of rigid local systems include realizations of certain finite groups as Galois groups over $\mathbb{Q}$ in the framework of the inverse Galois problem, see for example \cite{DR00}. \par
In \cite{Arinkin10} Arinkin provides a generalization of Katz' existence algorithm to rigid connections with irregular singularities. In this case, a connection is called rigid if it is determined up to isomorphism by the restrictions to formal punctured discs around the singularities. This reduces to the classical notion if all singularities are regular singular.
Let $\IC[z]\langle \partial_z\rangle$ be the Weyl-algebra in one variable and denote by 
\[F: \IC[\tau]\langle \partial_\tau\rangle\rightarrow \IC[z]\langle \partial_z\rangle \]
the map defined by $F(\tau)=-\partial_z$ and $F(\partial_\tau)=z$. The Fourier-Laplace transform $\F(M)$ of a holonomic left $\Weylz$-module $M$ is then defined to be its pullback along the map $F$, i.e. it has the same underlying $\IC$-vector space but $\IC[\tau]\langle \partial_\tau\rangle$ acts through the map $F$. \par
Using this additional operation, Arinkin proves that given an irreducible rigid system of rank greater than one, there is a sequence of twists, coordinate changes and Fourier transforms such that the resulting system has lower rank. Combining this with a result of Bloch and Esnault in \cite{BlochEsnault04} on the rigidity of the Fourier transform of a rigid connection yields the desired algorithm. \par
In this article, we provide an extension of the result of Dettweiler and Reiter to a class of irregular singular connections of type $G_2$. One of the most important concepts for us are the slopes of an irregular singular connection. These are rational numbers measuring the irregularity. In particular, a singularity is regular singular
if all the slopes at this singularity vanish. The slopes of a
singularity are obtained through the Newton polygon of a differential
operator. The main result of this article is a classification of all rigid irregular connections with slopes having numerator $1$ and with differential Galois group of type $G_2$. \par

There are two main reasons for assuming the shape of the slopes. Since twists with a rank one connection preserve rigidity, the slopes of rigid systems are a priori unbounded. Still, most known examples of rigid connections of type $G_2$ and of connections of similar type have slopes of the desired shape. This includes for example the Frenkel-Gross connection from \cite[Section 5]{Fr09} and generalized hypergeometric modules as studied in \cite[Chapter 3]{Ka90}. \par
The second reason is of a technical nature. In contrast to the regular singular case, the structure of a connection at an irregular singular point is much more complicated. At a regular singular point the behaviour of a connection is basically given by the monodromy matrix obtained by analytic continuation of solutions close to this point, i.e. the datum describing the singularity is essentially the conjugacy class of a complex invertible matrix. In contrast, at an irregular singular point one considers the restriction of a connection $\E$ to a formal neighborhood of the point. In this way, one obtains a differential module over $\KK$ (or a $\KK$-connection). By a classical result of Levelt-Turrittin, any $\KK$-connection decomposes into finitely many so called elementary modules 
\[\El(\rho,\varphi,R)=\rho_+(\EE^\varphi\otimes R)\]
where $\rho$ is a ramification of some degree $p$, $\EE^\varphi
=(\KK,d+d\varphi)$ for some $\varphi\in \KK$ and some regular
connection $R$. To completely describe the structure of a connection at an irregular point, we therefore have to keep track of much more data. The assumption on the slopes is a way to limit the complexity of this.
\par 
%
%A minor third reason is the following. Constructing connections with the Katz-Arinkin algorithm is technically and computationally involved. In addition, it is not at all a conceptual approach. Therefore, the new examples  serve as motivation to investigate new classes of rigid connections in a more structured way using other methods like the geometric Langlands correspondence, cf. \cite{HNY}. 
The classification in particular contains the construction of previously not know rigid irreducible connections of type $G_2$ which are neither hypergeometric nor a pull-back of these. One of the key points in the construction is to understand how the formal structure of rigid connections at irregular singularities behaves with Fourier transform. For this we rely heavily on the formal stationary phase formula of López (cf. \cite{Lo04}) and explicit computations of the local Fourier transform of elementary modules by Sabbah in \cite{Sa08}. This allows us to explicitly compute the Levelt-Turrittin decomposition after Fourier transform. \par
To state our main result we will use the following notation. We will write $\El(p,\alpha,A)$ for the elementary module $\rho_{p,+}(\EE^{\frac{\alpha}{u}}\otimes R)$ where $R$ is the connection on $\Spec \,\KKu$ with monodromy $A$ and $\rho_p(u)=u^p$. By $\lambda \J(n)$ we denote a Jordan block of length $n$ with eigenvalue $\lambda\in \IC$ and we will omit $\J(1)$. 

\begin{thm} \label{classif}Let $\alpha_1,\alpha_2,\lambda,x,y,z\in \IC^*$ such that $\lambda^2\neq 1, \alpha_1\neq \pm \alpha_2, z^4\neq 1$ and such that $x,y, xy$ and their inverses are pairwise different and let $\varepsilon$ be a primitive third root of unity. Every formal type occuring in the following list is exhibited by is exhibited by a unique (up to isomorphism) irreducible rigid connection of rank $7$ on $\Gm$ with differential Galois group $G_2$. 
\begin{center}
\begin{tabular}{ c c }
$0$ & $\infty$ \\
\hline \\
$(\J(3),\J(3),1)$ & \specialcell[c]{$\El( 2,\alpha_1,(\lambda,\lambda^{-1}))$ \\ $\oplus\, \El(2,2\alpha_1,1) \oplus (-1)$} \\ [15pt]
$(-\J(2),-\J(2),E_3) $ & \specialcell[c]{$\El( 2,\alpha_1,(\lambda,\lambda^{-1}))$ \\ $\oplus\, \El(2,2\alpha_1,1)\oplus (-1)$} \\ [15pt]
$(xE_2,x^{-1}E_2,E_3)$ & \specialcell[c]{$\El( 2,\alpha_1,(\lambda,\lambda^{-1}))$ \\ $\oplus\, \El(2,2\alpha_1,1)\oplus (-1)$} \\ [15pt] 
\hline \\
$(\J(3),\J(2), \J(2))$ & \specialcell[c]{$\El(2,\alpha_1,1) \oplus \El(2,\alpha_2,1)$ \\ $\oplus\,\El(2,\alpha_1+\alpha_2,1) \oplus (-1)$} \\ [15pt]
\hline \\
$(iE_2,-iE_2,-E_2,1)$ & \specialcell[c]{$\El(3,\alpha_1,1)$ \\ $\oplus\,\El(3,-\alpha_1,1)\oplus(1)$} \\ [15pt]
\hline \\
$\J(7)$ & $\El(6,\alpha_1, 1)\oplus(-1)$ \\ [10pt]
$(\varepsilon\J(3), \varepsilon^{-1}\J(3),1)$ & $\El(6,\alpha_1, 1)\oplus(-1)$ \\ [10pt]
$(z\J(2), z^{-1}\J(2), z^2, z^{-2},1)$ & $\El(6,\alpha_1, 1)\oplus(-1)$ \\ [10pt]
$(x\J(2),x^{-1}\J(2), \J(3))$ & $\El(6,\alpha_1, 1)\oplus(-1)$ \\ [10pt]
$(x,y,xy,(xy)^{-1},y^{-1},x^{-1}, 1)$ & $\El(6,\alpha_1, 1)\oplus(-1)$ \\ [10pt]
\end{tabular}
\end{center}
Conversely, the above list exhausts all possible formal types of irreducible rigid irregular $G_2$-connections on open subsets of $\PP^1$ with slopes having numerator $1$.
\end{thm}
This provides a classification of irreducible rigid
connections with differential Galois group $G_2$ with slopes of the desired shape, in particular providing the aforementioned non-hypergeometric examples of such systems. We will discuss which systems arise as pullbacks in the final section after proving the main theorem. \par
This article is organized as follows. In Section \ref{tannaka} we briefly review
the Tannakian formalism for connections on a curve and for connections
over $\KK$, providing tools to classify connections with prescribed
differential Galois group and to compute some invariants of such
connections. In particular we introduce the upper numbering filtration on the local differential Galois group which is used to study irregular $\KK$-connections. \par 
In Section \ref{rigloc} we recall the definition of rigidity and of the index of rigidity and recall the refined Levelt-Turrittin decomposition of $\KK$-connections. From this we give invariants classifying the formal
type of a connection and explain how to obtain restrictions on the formal type of a rigid connection. \par
In Section \ref{KA-alg} we recall the operations needed for the Katz-Arinkin algorithm and discuss the principle of stationary phase. \par
Section \ref{G2conn} is dedicated to the study of the local and global structure of rigid irregular irreducible connections with differential Galois group $G_2$. This provides a rough classification of these connections, in particular yielding the result that any such connection has at most two singularities and can therefore be seen as a connection on $\Gm$. Finally, we conclude the proof of Theorem \ref{classif} in Section \ref{proof}.

%
%In Section \ref{classif} we prove the main result of this paper which is the following theorem. For the exact notation we refer to the last section. 

\textbf{Acknowledgements.} The author would like to thank Michael
Dettweiler for his support during the writing of this article and
Stefan Reiter for various fruitful conversations concerning systems of
type $G_2$. The author was supported by the SPP 1489 during the
writing of this article.

% One of the most extensively studied class of rigid irregular connections are the generalized hypergeometric systems given by an operator
% \[\textup{Hyp}(P,Q)=P(z\partial_z)+z Q(z\partial_z) \in \IC[z]\langle \partial_z\rangle \]
% with $P,Q\in\IC[z]$ polynomials with different degrees. Such a system has singularities at $0$ and at $\infty$, one of which is regular singular and one of which is irregular, depending on whether the degree of $P$ is larger than that of $Q$ or vice versa. In the first case $\infty$ is irregular and $0$ is regular singular. The slope of this system at the irregular singularity is $\frac{1}{|\deg(P)-\deg(Q)|}\le 1$. Systems of this type and their differential Galois groups have been studied in detail by Katz in \cite{Ka90}. In particular he computed under which assumptions a system of the above type has differential Galois group $G_2$. The hypergeometrics of type $G_2$ are contained in the classification that we obtain.  \par 

\section{Tannakian Formalism for Connections}\label{tannaka}
Let $X$ be a smooth connected complex curve and denote by $\DE(X)$ the category of connections on $X$ as in \cite[1.1.]{Ka87}. By a connection we mean a locally free $\OO_X$-module $\EE$ of finite rank equipped with a connection map
\[\nabla: \EE\rightarrow \EE\otimes \Omega^1_{X/\IC}. \] 
Let $\X$ be the smooth compactification of $X$ and for any $x\in\X-X$ let $t$ be a local coordinate at $x$. The completion of the local ring of $\X$ at $x$ can be identified non-canonically with $\KK$. We define $\Psi_x(\EE)=\KK\otimes \EE$ to be the restriction of $\EE$ to the formal punctured disk around $x$.\par
Any $\Psi_x(\EE)$ obtained in this way is a $\KK$-connection, by which we mean a finite dimensional $\KK$-vector space admitting an action of the differential operator ring $\KK\langle\partial_t\rangle$. Its dimension will be called the \textit{rank} of the connection. The category of $\KK$-connections is denoted by $\textup{D.E.}(\KK)$. 
%
%\begin{lem}[\cite{vdP03}, Prop 2.9]
%Any $\KK$-connection $E$ has a cyclic vector, i.e. an element $e\in E$ such that $E$ is generated over $\KK$ by the elements $e,\partial_t e,\partial_t^2 e,...$. 
%\end{lem}

By \cite[Prop. 2.9]{vdP03} any $\KK$-connection $E$ is isomorphic to a connection of the form
\[\KK\langle\partial_t\rangle / (L) \]
for some operator $L\in \KK\langle\partial_t\rangle$ where $(L)$ denotes the left-ideal generated by $L$. To $L$ we can associate its Newton polygon $N(L)$ and the \textit{slopes} of $E$ are given by the slopes of the boundary of $N(L)$. These are independent of the choice of $L$. We call a $\KK$-connection \textit{regular singular} if all its slopes are zero. Any $\KK$-connection $E$ can be decomposed as
\[E=\bigoplus_{y\in \Q_{\ge 0}} E(y) \]
where only finitely many $E(y)$ are non-zero and where $\rk(E(y))\cdot y\in \Z_{\ge 0}$. The non-zero $y$ are precisely the slopes of $E$. We define the \textit{irregularity} of $E$ to be 
\[\irr(E):=\sum\ y\cdot\rk(E(y)).\]
It is always a non-negative integer. \par
Let $\EE$ be a connection on a smooth connected curve $X$ with smooth compactification $\X$ as before. We say that $\EE$ is \textit{regular singular} if the formal type $\Psi_x(\EE)$ at every singularity $x\in\X-X$ is regular singular. \par
%
%\begin{thm}[\cite{Hotta08}, Corollary 5.2.21.]\label{RHC}
%There is an equivalence of categories between the category of regular singular connections on $X$ and finite dimensional representations of the topological fundamental group of $X(\IC)$ based at $x\in X$. 
%\end{thm}
By \cite[Section 1.1.]{Ka87} the category $\DE(X)$ is a neutral Tannakian category. Therefore there is a pro-algebraic group $\pidiff(X,x)$ such that we have an equivalence of categories
\[\DE(X)\rightarrow \Rep_\IC(\pidiff(X,x)).\]
Given a connection $\EE$ denote by $\rho_\EE:\pidiff(X,x)\rightarrow \GL(\omega_x(\EE))$ the associated representation. The image of $\rho$ is isomorphic to the differential Galois group $\Gdiff(\EE)$ of $\EE$. \par

%In order to view irregular connections as representations we make the following observation, cf. \cite[Section 1.1.]{Ka87}. The category $\DE(X)$ admits natural notions of tensor products and internal hom. Given a point $x\in X(\IC)$ the functor $\EE\mapsto \EE_x$ defines a fibre functor 
%\[\omega_x:\DE(X)\rightarrow \Vect_\IC \]
%from the category of connections to the category of finite dimensional $\IC$-vector spaces. Therefore $\DE(X)$ is a neutral Tannakian category. Denote by $\pidiff(X,x)$ the pro-algebraic group $\Aut^\otimes(\omega_x)$. The functor $\omega_x$ induces an equivalence of categories
%\[\DE(X)\rightarrow \Rep_\IC(\pidiff(X,x))\]
%of the category of connections with the category of finite dimensional complex representations of $\pidiff(X,x)$. Given a connection $\EE$ denote by $\rho_\EE:\pidiff(X,x)\rightarrow \GL(\omega_x(\EE))$ the associated representation. The image of $\rho$ is isomorphic to the differential Galois group $\Gdiff(\EE)$ of $\EE$. \par
Let $G$ be a connected reductive group over $\IC$. We will call algebraic homomorphisms $\pidiff(X,x)\rightarrow G(\IC)$ $G$-connections on $X$. Given a connection $\EE$ we can also consider it as a $\Gdiff(\EE)$-connection through the factorization
\[\xymatrix{\pidiff(X,x) \ar[rr]\ar[dr]^{\rho_\EE} & & \GL_n(\IC) \\
	& \Gdiff(\EE)(\IC) \ar@{^{(}->}[ur] 
}.\]\par
In the local setting there are similar notions. Let $K=\KK$ and consider the category $\DE(K)$ of $K$-connections. By \cite[II. 2.4.]{Ka87} there is as before a pro-algebraic group $\Idiff$ and an equivalence 
\[DE(K) \rightarrow \Rep_\IC(\Idiff)\]
coming from Tannakian formalism. 
%
%
%We have natural notions of tensor products and internal hom in $\DE(K)$ turning $\DE(K)$ into a rigid abelian tensor category. There is a way to construct a fibre functor for $\DE(K)$ which is done as follows (cf. \cite[II. ,2.4.]{Ka87}). For any $K$-connection $E$ there is a connection $\M_E$ on $\Gm$ such that $\Psi_0(\M_E)=E$ and $\M_E$ is regular singular at infinity. We will call this connection the $\textit{Katz extension}$ of $E$. For any point $x\in \Gm(\IC)$ the functor
%\[\omega_x:\DE(K)\rightarrow \Vect_\IC\]
%given by $\omega_x(E)=(\M_E)_x$ is a $\IC$-valued fibre functor and induces an equivalence of the category $\DE(K)$ with the category $\Rep_\IC(\Idiff)$ for a pro-algebraic group $\Idiff$ which we call the local differential Galois group. 

Again if $\rho_E$ is the representation associated to $E$ its image $\im\rho_E=\Gloc(E)$ can be identified with the differential Galois group of $E$ considered as a differential module over $K$. Under the equivalence of $\DE(K)$ and $\Rep_\IC(\Idiff)$, horizontal sections correspond to invariant vectors. Hence we will sometimes abuse notation and write $E^{\Idiff}$ instead of $\Soln(E)$. \par
In addition, by \cite[II. 2.5.]{Ka87} there is a decreasing filtration $\Idiff^{(y)}$ indexed by $y\in \R_{>0}$ (called \textit{upper numbering filtration}) on $\Idiff$ with the property that for any connection $E$ with slopes $<y$ the kernel of its associated representation $\rho_E:\Idiff\rightarrow \GL(\omega(E))$ contains $\Idiff^{(y)}$. \par

%
%We have the \textit{upper numbering filtration} on $\Idiff$ which is a decreasing filtration defined in the following way. For any $y\in\R_{>0}$ let $\DE^{(<y)}(K)$ be the full subcategory of $\DE(K)$ consisting of connections with slopes $<y$ and denote by $\omega^y$ the restriction of $\omega$ to $\DE^{(<y)}(K)$. Dual to these subcategories there are faithfully flat homomorphisms 
%\[\Idiff\rightarrow \Aut^{\otimes}(\omega^y)\]
%whose kernels are closed normal subgroups of $\Idiff$. We denote them by $\Idiff^{(y)}$. This defines a decreasing filtration on $\Idiff$ with the property that for any connection $E$ with slopes $<y$ the kernel of its associated representation $\rho_E:\Idiff\rightarrow \GL(\omega(E))$ contains $\Idiff^{(y)}$. \par
Let $X$ be a smooth proper complex connected curve, $\Sigma$ a finite set of closed points of $X$ and $U=X-\Sigma$. In this situation, by \cite[II. 2.7]{Ka87} we can consider $\Gloc(\Psi_x(\EE))$ as a closed subgroup of $\Gdiff(\EE)$. This will allow us to deduce information about the differential Galois group of a connection from its formal type at the singularities. \par

\section{Rigid Connections and Local Data}\label{rigloc}
Let $X=\PP^1$ , $U$ a non-empty open subset of $X$ and $\EE\in \DE(U)$. We call the collection of isomorphism classes
\[\{[\Psi_x(\EE)]\}_{x\in X-U} \]
the \textit{formal type} of $\EE$, cf. \cite[2.1.]{Arinkin10}. 
%Note that $\Psi_x(\EE)$ is trivial whenever $x\in X$, so the formal type of $\EE$ is actually determined by the rank $\rk(\EE)$ of $\EE$ and the family $\{[\Psi_x(\EE)]\}_{x\in \X-X}$. 
We call a connection $\EE$ \textit{rigid} if it is determined up to isomorphism by its formal type. \par 
Fortunately there is a way to describe the structure of $\KK$-connections in a very explicit way, allowing for a classification of formal types. We introduce the following notation. For any formal Laurent series $\varphi\in \KKu$, non-zero ramification $\rho\in u\IC[[u]]$ and regular $\KKu$-connection $R$ we define
\[\El(\rho,\varphi,R):=\rho_+(\EE^\varphi\otimes R) \]
where $\rho_+$ denotes the push-forward connection and $\EE^\varphi$ is the connection 
\[(\KKu,d+d\varphi),\]i.e. it has an exponential solution $e^{-\varphi}$. Denote by $p$ the order of the ramification of $\rho$, by $q$ the order of the pole of $\varphi$ and by $r$ the rank of $R$. The connection $\El(\rho,\varphi,R)$ has a single slope $q/p$, its rank is $pr$ and its irregularity is $qr$.
\begin{thm}[Refined Levelt-Turrittin decomposition, \cite{Sa08}, Section 3]\label{LTthm} 
Let $E$ be a $\KK$-connection. There is a finite subset  $\Phi\subset \KKu$ such that 
 \[E\cong \bigoplus_{\varphi\in\Phi} \El(\rho_\varphi, \varphi, R_\varphi)\]
where $\rho_\varphi\in u\KKu\setminus \{0\}$ and $R_\varphi$ is a regular $\KKu$-connection. Denote by $p(\varphi)$ the order of $\rho_\varphi$. The decomposition is called \textit{minimal} if no $\rho_1,\rho_2$ and $\varphi_1$ exist such that $\rho_\varphi=\rho_1\circ\rho_2$ and $\varphi=\varphi_1\circ\rho_2$ and if for $\varphi,\psi\in\Phi$ with $p(\varphi)=p(\psi)$ there is no $p$-th root of unity $\zeta$ such that $\varphi=\psi\circ \mu_\zeta$ where $\mu_\zeta$ denotes multiplication by $\zeta$. In this case the above decomposition is unique.
\end{thm}
Therefore, to specify a connection $E$ over $\KK$ it is enough to give the finite set $\Phi$, the ramification maps $\rho_\varphi$ for all $\varphi\in\Phi$ and the monodromy of the connection $R_\varphi$. The latter can be given as a matrix in Jordan canonical form and we will use the notation $\lambda \J(n)$ for a Jordan block of length $n$ with eigenvalue $\lambda\in \IC$. For a general monodromy matrix we will write
\[(\lambda_1\J(n_1),...,\lambda_k\J(n_k)).\]
%
%For later use we will collect some facts about elementary modules in the following proposition.
%\begin{prop}[\cite{Sa08}, Section 2] \label{emprop} Let $\El(\rho,\varphi,R)$ and $\El(\nu,\psi,S)$ be elementary modules. The following holds. 
%\begin{compactenum}[(1)]
% \item The dual of $\El(\rho,\varphi,R)$ is given as $\El(\rho,-\varphi,R^*)$ where $R^*$ denotes the dual connection of $R$.
% \item Let $p$ be the degree of $\rho$, $r$ the rank of $R$ and let $(t^{(p-1)r/2})$ be the connection $(\KK, d+((p-1)r/2)dt/t)$. The determinant connection $\det\El(\rho,\varphi,R)$ is isomorphic to $\EE^{r\textup{Tr}\varphi}\otimes \det(R)\otimes (t^{(p-1)r/2})$ where $\textup{Tr}\varphi$ denotes the trace of $\varphi$ considered as linear operator on the $\KK$-vector space $\KKu$. 
% \item Suppose $\rho(u)=\nu(u)=u^p$. Then $\El(\rho,\varphi,R)\cong \El(\nu,\psi,S)$ if and only if there exists $\zeta$ with $\zeta^p=1$ and $\psi\circ\mu_\zeta\equiv \varphi \mod \IC[[u]]$ and $R\cong S$ where $\mu_\zeta$ denotes multiplication by $\zeta$. 
% \item More generally, suppose the degree of $\rho$ and the degree of $\nu$ are both $p$. Then $\El(\rho,\varphi,R)\cong \El(\nu,\psi,S)$ if and only if $R\cong S$ and there exists $\zeta$ with $\zeta^p=1$ and $\lambda_1,\lambda_2\in u\IC[[u]]$ satisfying $\lambda_i'(0)\neq 0$ such that $\rho=\nu\circ \lambda_1$ and
% \[\varphi\equiv \psi\circ\lambda_1\circ (\lambda_2^{-1} \circ\mu_\zeta\circ\lambda_2)\mod\IC[[u]].\]
% \item We have $\rho^+\rho_+\EE^\varphi\cong \bigoplus_{\zeta^p=1} \EE^{\varphi\circ\mu_{\zeta}}$.
%\end{compactenum}
%\end{prop}

There is a criterion to identify rigid irreducible connections due to Katz in the case of regular singularities with a generalization by Bloch and Esnault in the case of irregular singularities.
\begin{prop}[\cite{BlochEsnault04}, Thm. 4.7. \& 4.10.] Let $\EE$ be an irreducible connection on $j:U\hookrightarrow \PP^1$. Denote by $j_{!*}$ the middle extension functor, cf. \cite[Section 2.9]{Ka90}. The connection $\EE$ is rigid if and only if 
\[\chi(\PP^1,j_{!*}(\END(\EE)))=2\]
where $\chi$ denotes the Euler-de Rham characteristic. 
\end{prop}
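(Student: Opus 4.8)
The plan is to translate the cohomological condition into a statement about infinitesimal deformations and then to pin down its numerical content using irreducibility and duality. First I would record that, because $\EE$ is irreducible, the trace pairing makes $\END(\EE)$ self-dual and exhibits the trivial connection as a direct summand. By Schur's lemma the global horizontal sections of $\END(\EE)$ are the scalars, so the de Rham cohomology satisfies $\dim H^0(\PP^1,j_{!*}\END(\EE))=1$; Poincar\'e--Verdier duality for the self-dual middle extension on the proper curve $\PP^1$ then gives $\dim H^2(\PP^1,j_{!*}\END(\EE))=\dim H^0(\PP^1,j_{!*}\END(\EE))=1$. Consequently $\chi(\PP^1,j_{!*}\END(\EE))=2-\dim H^1(\PP^1,j_{!*}\END(\EE))$, and the asserted criterion $\chi=2$ is equivalent to the vanishing $H^1(\PP^1,j_{!*}\END(\EE))=0$. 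This reduces the proposition to identifying $H^1$ with a space that vanishes precisely when $\EE$ is rigid.

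The core step is the deformation-theoretic interpretation of $H^1$. First-order deformations of $\EE$ as a connection on $U$ are classified by the de Rham group $H^1_{\textup{dR}}(U,\END(\EE))$, the class being that of the first-order variation of the connection, an $\END(\EE)$-valued one-form modulo gauge. I would argue that the subspace of deformations preserving the formal type $[\Psi_x(\EE)]$ at every $x\in\PP^1\setminus U$ is exactly $H^1(\PP^1,j_{!*}\END(\EE))$: a deformation keeps the formal type at $x$ if and only if it is formally trivial there, and the passage from $H^1_{\textup{dR}}(U,-)=H^1(\PP^1,j_*(-))$ to the middle extension removes precisely the local directions along which the formal isomorphism class can change. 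Since rigidity means that $\EE$ is determined up to isomorphism by its formal type, it is equivalent to the absence of nontrivial formal-type-preserving first-order deformations, that is, to $H^1(\PP^1,j_{!*}\END(\EE))=0$.

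To make this identification rigorous I would use the canonical maps $j_!\END(\EE)\to j_{!*}\END(\EE)\to j_*\END(\EE)$ and the associated long exact sequences in de Rham cohomology, whose local terms are governed by the nearby- and vanishing-cycle data at each puncture. Chasing these sequences expresses the difference between the open cohomology and the middle-extension cohomology in terms of the local formal invariants, thereby realizing $H^1(\PP^1,j_{!*}\END(\EE))$ as the formal-type-preserving deformations modulo the globally trivial ones. At the level of numbers, the Euler--Poincar\'e formula for irregular connections, computing $\chi$ from the generic rank $h(\END(\EE))$, the local drops in rank, and the irregularities $\irr_x$, makes the invariant explicit and compatible with the local data introduced above.

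The main obstacle is carrying the deformation argument through in the presence of irregular singularities. In the regular case one may phrase everything via monodromy representations, but with irregular singularities one must instead control the formal structure through the refined Levelt--Turrittin decomposition into elementary modules and verify that ``preserving the formal type'' is the correct local condition, the one cut out by the middle extension rather than by $j_!$ or $j_*$. Establishing the self-duality of $j_{!*}\END(\EE)$ under Poincar\'e--Verdier duality for irregular middle extensions, and matching the local cohomology terms with the vanishing-cycle local data, is where the real work lies; the numerical reduction of the first paragraph is then formal.
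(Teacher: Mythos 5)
The paper does not prove this proposition itself; it is imported from Bloch--Esnault, so your attempt has to be measured against the argument in that reference. Your first reduction is correct and is the standard one: Schur's lemma gives $\dim H^0(\PP^1,j_{!*}\END(\EE))=1$, duality gives $\dim H^2(\PP^1,j_{!*}\END(\EE))=1$, and hence $\chi=2$ is equivalent to $H^1(\PP^1,j_{!*}\END(\EE))=0$. The genuine gap is in your central step, where you identify $H^1$ with formal-type-preserving first-order deformations and then declare this equivalent to rigidity. Rigidity as defined in this paper is a statement about isomorphism classes: \emph{every} connection $\EE'$ with the same formal type as $\EE$ is isomorphic to $\EE$. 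That is not the same as the vanishing of a tangent space. In one direction you would have to integrate a nonzero infinitesimal deformation to an actual family and show its members are pairwise non-isomorphic (this is the moduli-space content of Bloch--Esnault's Theorem 4.10, and it is real work); in the other direction, $H^1=0$ only says $\EE$ is infinitesimally isolated, and gives you no handle on an arbitrary $\EE'$ with the same formal data that need not be connected to $\EE$ by any deformation. Neither implication follows from the deformation dictionary alone, so the equivalence you need is asserted rather than proved.

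The implication $\chi=2\Rightarrow\textup{rigid}$ in fact has a short proof that bypasses deformation theory entirely, and this is essentially Theorem 4.7 of the cited paper: if $\EE'$ is a connection with the same formal type as $\EE$, the Euler--Poincar\'e formula shows that $\chi(\PP^1,j_{!*}\HOM(\EE,\EE'))$ depends only on the formal types at the punctures, hence equals $\chi(\PP^1,j_{!*}\END(\EE))=2$; since $h^0\le 1$ and $h^2\le 1$ by irreducibility of $\EE$ and $\EE'$ while $h^1\ge 0$, this forces $h^0=1$, i.e.\ a nonzero horizontal section of $\HOM(\EE,\EE')$, which is an isomorphism because both connections are irreducible. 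To repair your write-up, replace the deformation heuristic by this Hom-trick for that direction, and for the converse supply an actual construction of a positive-dimensional family of pairwise non-isomorphic connections with fixed formal type out of a nonzero class in $H^1(\PP^1,j_{!*}\END(\EE))$.
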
 
For this reason, we set $\rig(\EE)=\chi(\PP^1,j_{!*}(\END(\EE))$ and call it the \textit{index of rigidity}. Whenever $\rig(\EE)=2$ we say that $\EE$ is \textit{cohomologically rigid}. The index of rigidity can be computed using local information only.
\begin{prop}[\cite{Ka90}, Thm 2.9.9.] Let $\EE$ be an irreducible connection on the open subset $j:U\hookrightarrow \PP^1$ and let $\PP^1-U=\{x_1,...,x_r\}$. The index of rigidity of $\EE$ is given as
\[\rig(\EE)=(2-r)\rk(\EE)^2-\sum_{i=1}^r \irr_{x_i}(\END(\EE))+\sum_{i=1}^r \dim_\IC\Soln_{x_i}(\END(\EE))\]
where $\Soln_{x_i}(\END(\EE))$ is the space of horizontal sections of $\Psi_{x_i}(\END(\EE))=\KK\otimes\END(\EE)$.
\end{prop}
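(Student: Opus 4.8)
The plan is to prove the displayed identity for an arbitrary connection $\mathcal{G}$ on $U$ and then specialize to $\mathcal{G}=\END(\EE)$, which has rank $h(\EE)^2$. Writing $j\colon U\hookrightarrow\PP^1$, I would compare the middle extension $j_{!*}\mathcal{G}$ with the meromorphic ($*$-)extension $j_*\mathcal{G}$. By definition $j_{!*}\mathcal{G}$ is the smallest holonomic submodule of $j_*\mathcal{G}$ restricting to $\mathcal{G}$ on $U$, so in the category of holonomic $\DD_{\PP^1}$-modules there is a canonical exact sequence
\[0\to j_{!*}\mathcal{G}\to j_*\mathcal{G}\to \mathcal{C}\to 0,\]
where $\mathcal{C}=\bigoplus_{i=1}^r\mathcal{C}_{x_i}$ is punctual, supported on $\PP^1-U=\{x_1,\dots,x_r\}$. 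Since the Euler--de Rham characteristic is additive on short exact sequences, this reduces the computation of $\chi(\PP^1,j_{!*}\mathcal{G})$ to two inputs: the Euler characteristic of the meromorphic extension and the lengths of the punctual cokernels $\mathcal{C}_{x_i}$.

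For the first input I would invoke the Euler--Poincaré (Deligne--Malgrange) index formula for connections on a curve. As $\mathrm{DR}(j_*\mathcal{G})=Rj_*\,\mathrm{DR}(\mathcal{G})$, one has $\chi(\PP^1,j_*\mathcal{G})=\chi_{\mathrm{dR}}(U,\mathcal{G})$, and the index theorem gives
\[\chi_{\mathrm{dR}}(U,\mathcal{G})=\rk(\mathcal{G})\cdot\chi(U)-\sum_{i=1}^r\irr_{x_i}(\mathcal{G}).\]
Because $U=\PP^1-\{x_1,\dots,x_r\}$ has topological Euler characteristic $\chi(U)=2-r$, this already produces the first two terms $(2-r)h(\EE)^2-\sum_i\irr_{x_i}(\END(\EE))$ after substituting $\mathcal{G}=\END(\EE)$.

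The second input is local. A punctual module supported at $x$ of length $m$ has de Rham Euler characteristic $-m$: the elementary delta module $\OO(*x)/\OO$ has $H^0_{\mathrm{dR}}=0$ and one--dimensional $H^1_{\mathrm{dR}}$, giving $\chi=-1$. Hence $\chi(\PP^1,\mathcal{C})=-\sum_i\mathrm{length}(\mathcal{C}_{x_i})$ and therefore $\chi(\PP^1,j_{!*}\mathcal{G})=\chi(\PP^1,j_*\mathcal{G})+\sum_i\mathrm{length}(\mathcal{C}_{x_i})$. It remains to identify $\mathrm{length}(\mathcal{C}_{x_i})=\dim_\IC\Soln_{x_i}(\mathcal{G})$. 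Here I would split the formal connection $\Psi_{x_i}(\mathcal{G})$ into its regular singular and purely irregular parts. On the purely irregular part the $*$- and middle extensions coincide (there is no punctual sub- or quotient module) and the solution space vanishes, so it contributes $0$ to both sides. On the regular singular part, the quiver description $\psi_t\rightleftarrows\phi_t$ via $\can,\var$ shows that passing from $j_*\mathcal{G}$ to $j_{!*}\mathcal{G}$ replaces the vanishing cycle space $\phi_t\cong\psi_t$ by $\mathrm{im}(\can)\cong\mathrm{im}(T_i-1)$, where $T_i$ is the local monodromy; the punctual cokernel thus has length $\dim\psi_t-\dim\mathrm{im}(T_i-1)=\dim\ker(T_i-1)$. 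This is exactly the dimension of the monodromy invariants, i.e. of the single-valued horizontal sections $\Soln_{x_i}(\mathcal{G})$. Summing over $i$ yields the stated formula.

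The main obstacle is the local analysis at the irregular singularities: both the irregularity correction in the index formula and the equality $\mathrm{length}(\mathcal{C}_{x_i})=\dim\Soln_{x_i}(\mathcal{G})$ require the Deligne--Malgrange/Levelt--Turrittin formalism for $\KK$-connections rather than the naive local-system picture. Once one knows that only the regular part carries solutions and accounts for the entire jump between the two extensions, the regular-case length computation is classical and the global bookkeeping is routine. This is the content of \cite[Thm.~2.9.9]{Ka90}; the essential irregular input is the index theorem, which itself rests on the additivity of the irregularity established through the refined Levelt--Turrittin decomposition into elementary modules recalled above.
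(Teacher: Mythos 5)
Your proposal is correct, and since the paper does not prove this proposition but cites it as \cite[Thm.~2.9.9]{Ka90}, your argument is in substance the standard proof of that cited result: the exact sequence $0\to j_{!*}\mathcal{G}\to j_*\mathcal{G}\to\mathcal{C}\to 0$, the Euler--Poincar\'e formula $\chi(U,\mathcal{G})=\rk(\mathcal{G})(2-r)-\sum_i\irr_{x_i}(\mathcal{G})$, the count $\chi=-1$ per simple punctual module, and the identification $\mathrm{length}(\mathcal{C}_{x_i})=\dim\ker(T_i-1)=\dim_\IC\Soln_{x_i}(\mathcal{G})$ with the purely irregular part contributing zero to both sides. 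Your local bookkeeping is moreover consistent with the local term $\delta(E)=\irr(E)+\rk(E)-\dim\Soln(E)$ that the paper itself uses later, so nothing is missing.
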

Recall that $\Soln_{x_i}(\END(\EE))$ can be regarded as the space of invariants of the $\Idiff$-representation associated to $\Psi_{x_i}(\END(\EE))$. In the following we will see how to compute all local invariants appearing in the above formula provided we know the Levelt-Turrittin decomposition of the formal types at all points. Let $E$ be a $\KK$-connection with minimal Levelt-Turrittin decomposition
\[E=\bigoplus_i \El(\rho_i,\varphi_i,R_i).\]
Its endomorphism connection is then given by 
\[\label{endtensor} E\otimes E^*=\bigoplus_{i,j}\Hom(\El(\rho_i,\varphi_i,R_i),\El(\rho_j,\varphi_j,R_j)).\]
As the irregularity of $E\otimes E^*=\End(E)$ is given as sum over the slopes, it can be computed by combining this decomposition with \cite[Prop. 3.8.]{Sa08}.
%
% the following proposition of Sabbah.
%\begin{prop}[\cite{Sa08}, Prop. 3.8.]\label{elmthom}
%Let $\rho_i(u)=u^{p_i}, d=\gcd(p_1,p_2), p'_i=p_i/d$ and $\tilde{\rho}_i(w)=w^{p'_i}$. Consider the elementary connections $\El(\rho_i,\varphi_i,R_i), i=1,2$. We have
%\[\Hom(\El(\rho_1,\varphi_1,R_1),\El(\rho_2,\varphi_2,R_2))\cong \bigoplus_{k=0}^{d-1} \El([w\mapsto w^{p_1p_2/d}],\varphi^{(k)},R), \]
%where 
%\[\varphi^{(k)}(w)=\varphi_2(w^{p'_1})-\varphi_1((e^{\frac{2\pi ikd}{p_1p_2}}w)^{p'_2}) \]
%and $R=\tilde{\rho}_2^+R_1^*\otimes \tilde{\rho}_1^+R_2$.
%\end{prop}
Note that $\dim\Soln(E)=\dim\Soln(E^\reg)$ as any connection which is purely irregular has no horizontal sections over $\KK$ (otherwise it would contain the trivial connection). If $E$ has minimal Levelt-Turrittin decomposition $E=\bigoplus_i \El(\rho_i,\varphi_i,R_i)$, Sabbah shows in \cite[3.13.]{Sa08} that 
\begin{align}\label{centdecomp}\End(E)^{\reg}=\bigoplus_i\rho_{i,+}\End(R_i).\end{align}
A regular $\KKu$-connection $R$ is completely determined by its nearby cycles $(\psi_uR,T)$ with monodromy $T$. Its push-forward along any $\rho\in u\IC[[u]]$ of degree $p$ corresponds to the pair $(\psi_uR\otimes\IC^p,\rho_+T)$ with $\rho_+T$ given by the Kronecker product $T^{1/p}\otimes P_p$. Here $T^{1/p}$ is a $p$-th root of $T$ and $P_p$ is the cyclic permutation matrix on $\IC^p$. This is the formal monodromy of the push-forward connection. Let $V_{\rho_{+}R}$ be the $\Idiff$-representation associated to $\rho_+R$. We have
\[\dim\Soln(\rho_+R)=\dim V_{\rho_{+}R}^{\Idiff}=\dim \ker(\rho_+T-\id)=\dim\ker(T-\id). \]
In particular 
\begin{align*}\label{centdim}\tag{Z}
 \dim\Soln(\rho_+\End(R))&=\dim \ker(\rho_+\Ad(T)-\id) \\
 &=\dim\ker(\Ad(T)-\id) \\
 &=\dim\textup{Z}(T) 
\end{align*}
where $\textup{Z}(T)$ is the centraliser of $T$. Combining this with Formula \ref{centdecomp} allows us to compute $\dim\Soln(E)$ for any connection $E$ provided we know its Levelt-Turrittin decomposition. In particular, the condition that a connection $\EE$ is rigid provides us with restrictions on the irregularity and the centraliser dimensions of the monodromies of regular connections appearing in the Levelt-Turrittin decomposition. 

\section{The Katz-Arinkin Algorithm for Rigid Connections} \label{KA-alg}
We recall the various operations involved in the Arinkin algorithm as defined in \cite{Arinkin10}. Let $D_z=\IC[z]\langle \partial_z\rangle$ be the Weyl-algebra in one variable and $M$ a finitely generated left $D_z$-module. 
%
%We say that $M$ is \textit{holonomic} if either
%\begin{compactenum}[(i)]
%\item $\dim_{\IC(z)} (M\otimes \IC(z)) < \infty$, 
%\item there is an open subset $U\subset \A^1$ such that $M|_U$ is a connection or
%\item $M$ is a cyclic $D_z$-module. 
%\end{compactenum}
%These properties are all equivalent. 

The Fourier isomorphism is the map 
\begin{align*} F:D_\tau&\rightarrow D_z \\
			\tau&\mapsto \partial_z \\
			\partial_\tau&\mapsto -z.
\end{align*} 
From now on we will always denote the Fourier coordinate by $\tau$ in the global setting. We will also use a subscript to indicate the coordinate on $\A^1$. Let $M$ be a finitely generated $D_z$-module on $\A^1_z$. The \textit{Fourier transform} of $M$ is 
\[\F(M)=F^*(M).\] 
Denote by $F^\vee: D_z\rightarrow D_\tau$ the same map as above with the roles of $z$ and $\tau$ reversed and let $\F^\vee=(F^\vee)^*$. \par 

%We see that $M$ is holonomic if and only if $\F(M)$ is holonomic.

 The functor $\F$ defines an auto-equivalence 
\[\F:\Hol(\A^1_z)\rightarrow \Hol(\A^1_\tau)\] 
of the category of holonomic $D_z$-modules on $\A^1$. 
We have $\F^\vee\circ \F=\varepsilon^*$ where $\varepsilon$ is the automorphism of $D_z$ defined by $\varepsilon(z)=-z$ and $\varepsilon(\partial_z)=-\partial_z$. \par
Using the Fourier transform we define the middle convolution as follows. For any $\chi\in \IC^*$ let $\K_\chi$ be the connection on $\Gm$ associated to the character $\pi_1(\Gm,1)\rightarrow \IC^\times$ defined by $\gamma \mapsto \chi$ where $\gamma$ is a generator of the fundamental group. We call $\K_\chi$ a \textit{Kummer sheaf}. Explicitly, $\K_\chi$ can be given as the trivial line bundle $\OO_{\Gm}$ equipped with the connection $d+\alpha d/dz$ for any $\alpha\in \IC$ such that $\exp(-2\pi i\alpha)=\chi$.\par
Let $i:\Gm\hookrightarrow \A^1$ be the inclusion. The \textit{middle convolution} of a holonomic module $M$ with the Kummer sheaf $\K_\chi$ is defined as
\[\MC_\chi(M):= \F^{-1}(i_{!*}(\F(M)\otimes \K_{\chi^{-1}})) \]
where $\F^{-1}$ denotes the inverse Fourier transform and $i_{!*}$ is the minimal extension. Note that $\F(\K_{\chi})=\K_{\chi^{-1}}$. \par
Given a connection $\EE$ on an open subset $j:U\hookrightarrow \A^1$ we can apply the Fourier transform or the middle convolution to its minimal extension $j_{!*}\EE$. We end up with a holonomic module on $\A^1$ which we can restrict in both cases to the complement of its singularities. This restriction is again a connection on some open subset of $\A^1$ and we denote it by $\F(\EE)$ for the Fourier transform and $\MC_\chi(\EE)$ for middle convolution. Whenever $\EE$ is defined on an open subset $U\subset \PP^1$ we can shrink $U$ such that $\infty\notin U$ and apply the above construction. \par
The Katz-Arinkin algorithm is given in the following theorem. It was proven in the case of regular singularities by Katz in \cite{Ka96} and in the case of irregular singularities by Arinkin in \cite{Arinkin10} (and presented in a letter by Deligne to Katz). 
\begin{thm} Let $\EE$ be an irreducible connection on an open subset $U\subset \PP^1$ and consider the following operations.
	\begin{enumerate}[noitemsep,label=(\roman{*})]
		\item Twisting with a connection of rank one,
		\item change of coordinate by a Möbius transformation,
		\item Fourier transform and
		\item middle convolution. 
	\end{enumerate}
The connection $\EE$ is rigid if and only if it can be reduced to a regular singular connection of rank one using a finite sequence of the above operations. 	
\end{thm}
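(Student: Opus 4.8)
I would first check that each of the four operations preserves irreducibility and the index of rigidity $\rig(\EE)=\chi(\PP^1,j_{!*}\END(\EE))$. Twisting by a rank-one regular connection $L$ is harmless because $\END(\EE\otimes L)\cong\END(\EE)$, so $\rig$ is unchanged; a Möbius change of coordinate is pullback along an automorphism of $\PP^1$ and preserves $\chi$ and irreducibility tautologically. That the Fourier transform preserves rigidity is the theorem of Bloch and Esnault recalled in the introduction, and $\F$ is invertible since $\F^\vee\circ\F=\varepsilon^*$. Finally $\MC_\chi$ is assembled from $\F$, a tensor product with the rank-one Kummer sheaf $\K_{\chi^{-1}}$, and the minimal extension $i_{!*}$, with inverse $\MC_{\chi^{-1}}$, so it too preserves rigidity and irreducibility. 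A rank-one regular connection $L$ has trivial endomorphism connection $\END(L)\cong(\OO,d)$, and the rigidity formula gives $\rig(L)=2$. Hence if $\EE$ reduces to such an $L$ by a finite sequence of these (rigidity-preserving) operations, then $\rig(\EE)=\rig(L)=2$, so $\EE$ is rigid.

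\textbf{The hard ("only if") direction.} Here the plan is to run the reduction as a descent on a numerical complexity invariant, using $\rig(\EE)=2$ to guarantee that a strictly reducing step always exists as long as $h(\EE)>1$. First I would use operations (i) and (ii) to put $\EE$ into normal form: shrinking $U$ so that $\infty\notin U$, clearing a chosen eigenvalue of the formal type at a singularity by an appropriate rank-one twist, and moving singularities by a Möbius transformation. With $\EE$ so normalized I would apply a Fourier transform and/or a middle convolution $\MC_\chi$ for a carefully chosen $\chi\in\IC^*$ and compute the resulting change in the generic rank and in the local irregularities $\irr_{x_i}$. These changes are controlled by the formal stationary phase formula of López together with Sabbah's explicit description of the local Fourier transform of the elementary modules $\El(\rho,\varphi,R)$, which is exactly the reason those computations are assembled earlier. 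Combined with the identity
\[(2-r)h(\EE)^2-\sum_{i=1}^r\irr_{x_i}(\END(\EE))+\sum_{i=1}^r\dim_\IC\Soln_{x_i}(\END(\EE))=2\]
this lets me track the invariant precisely through each move.

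\textbf{Termination and the main obstacle.} The crux is to show that for a rigid irreducible connection with $h(\EE)\ge 2$ one of these moves strictly decreases the complexity, so that the procedure terminates at a rank-one connection which, after a final twist, is regular singular. Concretely I expect the invariant to be the rank, refined in the irregular case by the irregularity and slope data, and the reduction to split into two regimes: when $\EE$ is regular singular everywhere, Katz's original rank-reducing middle convolution applies, whereas when $\EE$ carries irregular singularities a Fourier transform is used first to bring the formal type at infinity under control before convolving. The hard part is ruling out "stuck" configurations, that is, proving that $\rig(\EE)=2$ is incompatible with every available move failing to reduce the invariant, while simultaneously verifying that irreducibility and rigidity survive each step. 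This is precisely where the rigidity equation above is forced to interact with the rank-and-irregularity changes computed from stationary phase, and it is where the bulk of the work — and the appeal to Arinkin's analysis in the irregular setting — resides.
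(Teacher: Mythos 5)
First, note that the paper does not actually prove this theorem: it is quoted as the Katz--Arinkin theorem, with the regular-singular case attributed to Katz and the irregular case to Arinkin, so there is no internal proof to compare against. Judged on its own terms, your ``if'' direction is essentially complete: each of the four operations preserves irreducibility and the index of rigidity (the nontrivial case being the Fourier transform, which is the Bloch--Esnault theorem), a regular singular rank-one connection $L$ has $\rig(L)=(2-r)\cdot 1-0+r=2$, and the equivalence of rigidity with $\rig=2$ for irreducible connections then transports rigidity back to $\EE$ along the invertible chain of operations.

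The ``only if'' direction, however, is a plan rather than a proof, and the plan stops exactly where the theorem begins. The entire content of the Katz--Arinkin theorem is the existence, for every rigid irreducible $\EE$ with $h(\EE)\ge 2$, of a specific twist-plus-Fourier (or convolution) step that strictly decreases the rank; you correctly identify this as ``the crux'' but do not supply the mechanism, deferring to ``Arinkin's analysis.'' Concretely, what is missing is the following: from $\chi(\PP^1,j_{!*}\END(\EE))=2$ one derives an inequality on the quantities $\delta(\End(\Psi_x(\EE)))=\irr+\rk-\dim\Soln$ at the singularities, which forces the existence of irreducible subrepresentations $V_x\subset\Psi_x(\EE)$ satisfying $\delta(\End(\Psi_x(\EE)))\ge \frac{\rk(\EE)}{\rk(V_x)}\delta(\Hom(V_x,\Psi_x(\EE)))$; the rank-one twist $\ell$ is then manufactured from the $V_x$ (this is precisely what the unlabeled lemma in Section 3 of the paper extracts from the proof of Arinkin's Theorem A), and the strict rank drop is read off from $h(\MC_\chi(M))=h(\F(M))-h(M)$ together with the local formula for $h(\F(M))$ in terms of vanishing-cycle data. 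One must also verify that a final twist genuinely lands on a \emph{regular singular} rank-one connection, which requires controlling the slopes as in the paper's Lemma on twists of slope at most one. Without these quantitative steps the descent has no engine, so as written the hard direction is an accurate table of contents for Katz's and Arinkin's proofs rather than a proof.
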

%
%As middle convolution is itself a combination of Fourier transforms and twists the above statement holds even when omitting convolution. A crucial point in the proof of the above statement is the fact that all these operations preserve the index of rigidity. This was proven by Bloch and Esnault in \cite[Theorem 4.3.]{BlochEsnault04} using the local Fourier transform which they defined in characteristic zero as an analogue to Laumon's local Fourier transform from \cite{Laumon87}. \par

The behaviour of the formal type of a connection under Fourier transform is governed by local Fourier transforms (as defined by Bloch and Esnault in \cite[Section 3]{BlochEsnault04}) and the principle of stationary phase. Let $E$ be a $\KK$-connection. The \textit{local Fourier transform} of $E$ from zero to infinity is obtained in the following way. Due to \cite[Section 2.4.]{Ka87} there is an extension of $E$ to a connection $\M_E$ on $\Gm$ which has a regular singularity at infinity and whose formal type at zero is $E$. We define
\[\F^{(0,\infty)}(E):=\F(\M_E)\otimes_{\IC[\tau]}\IC(\!(\theta)\!) \]
where $\tau$ is the Fourier transform coordinate and $\theta=\tau^{-1}$. In a similar fashion define for $s\in \IC^*$ transforms 
\[\F^{(s,\infty)}(E)=\E^{s/\theta}\otimes\F^{(0,\infty)}(E)\]
where $\E^{s/\theta}$ denotes as before the rank one connection with solution $e^{s/\theta}$. Recall that there also is a transform $\F^{(\infty,\infty)}$ which is of no interest to us, as it only applies to connections of slope larger than one. For details on this transform we refer to \cite[Section 3.]{BlochEsnault04}. \par
There are also transforms $\F^{(\infty,s)}$ which are inverse to $\F^{(s,\infty)}$, see \cite[Section 1]{Sa08}. For the local Fourier transforms Sabbah computed explicitly how the elementary modules introduced in the first section behave. The most important tool for controlling the formal type under Fourier transform is the formal stationary phase formula of López.
\begin{thm}[\cite{Lo04}, Section 1]
Let $M$ be a holonomic $D$-module on $\A^1$ with finite singularities $\Sigma$. There is an isomorphism
\[\Psi_\infty(\F(M)) \cong \bigoplus_{s\in \Sigma\cup\{\infty\}}\F^{(s,\infty)}(M). \]
\end{thm}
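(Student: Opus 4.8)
The plan is to follow Laumon's argument for the stationary phase principle in positive characteristic (\cite{Laumon87}), transported to the complex setting through the local Fourier transforms of Bloch--Esnault and Sabbah. The starting point is to realize the global Fourier transform as a relative de Rham pushforward: writing $p_z,p_\tau\colon \A^1_z\times\A^1_\tau\to\A^1$ for the two projections and $\E^{z\tau}=(\OO,d+d(z\tau))$ for the kernel, one has $\F(M)\cong p_{\tau,+}(p_z^+M\otimes\E^{z\tau})$. To access the germ $\widehat{\F(M)}_\infty$ I would compactify in the $z$-direction to $\PP^1_z\times\A^1_\tau$ and study the integrand $p_z^+M\otimes\E^{z\tau}$ near the points of $(\Sigma\cup\{\infty\})\times\{\tau=\infty\}$, since these are exactly the loci where it fails to be $p_\tau$-smooth.

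The heart of the argument is a localization statement: the germ $\widehat{\F(M)}_\infty$ should depend only on the behaviour of the integrand along the vertical divisors $\{z=s\}\times\A^1_\tau$ for $s\in\Sigma$ together with $\{z=\infty\}\times\A^1_\tau$. First I would establish, by an excision / Mayer--Vietoris argument for the relative de Rham complex of $p_\tau$, that $\widehat{\F(M)}_\infty$ splits as a sum of contributions $C_s$ indexed by $s\in\Sigma\cup\{\infty\}$, with each $C_s$ computed from $\widehat{M}_s$ alone. This is the algebraic incarnation of the classical heuristic: as $\tau\to\infty$ the asymptotics of the oscillatory integral $\int M\,e^{z\tau}\,dz$ are governed by the critical loci of the phase, which lie over the singularities of $M$.

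The second step is to identify each $C_s$ with the corresponding local transform and to check directness. For a finite singularity $s$ I would translate $z=s+u$; this factors the kernel as $\E^{s\tau}\otimes\E^{u\tau}$, and since $\theta=\tau^{-1}$ gives $s\tau=s/\theta$, comparison with the definition of $\F^{(0,\infty)}$ through the auxiliary extension $\M_E$ on $\Gm$ yields $C_s\cong\E^{s/\theta}\otimes\F^{(0,\infty)}(\widehat{M}_s)=\F^{(s,\infty)}(\widehat{M}_s)$, precisely the twist recorded in the definition; the piece over $z=\infty$ matches $\F^{(\infty,\infty)}(\widehat{M}_\infty)$ (harmless in our slope-at-most-one regime). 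The sum is then direct for slope reasons: for finite $s\neq0$ the factor $\E^{s/\theta}$ has slope one, so $C_s$ has pure slope one with leading exponential $s\tau$, whereas $C_0$ has slopes $<1$ and $C_\infty$ has slopes $>1$; distinct $s$ give distinct leading exponentials, so there are no nonzero morphisms between the $C_s$ in $\DE(\KKth)$ and the filtration produced by the excision argument must split.

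The main obstacle I expect is the localization of the second paragraph: making precise, in the algebraic $\DD$-module language rather than through asymptotic analysis of integrals, that only the local data $\widehat{M}_s$ survive in the germ at $\tau=\infty$. This requires careful control of the relative de Rham cohomology of $p_\tau$ near $\{z=\infty\}$, where both $M$ and the kernel $\E^{z\tau}$ are irregular, and it is here that the slope bookkeeping (how a slope $\lambda$ at a finite $s$ produces, via $\F^{(0,\infty)}$ and before the twist by $\E^{s/\theta}$, a slope $\lambda/(\lambda+1)$, together with the induced change of rank and multiplicities) must be tracked. Once the splitting is in place, the identification and the direct-sum claim are comparatively formal, resting on the explicit local Fourier transform computations of Sabbah in \cite{Sa08}.
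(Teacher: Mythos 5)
The paper does not prove this statement at all: it is quoted verbatim from García López's work \cite{Lo04}, and the whole point of the surrounding text is to use it as a black box (together with Sabbah's explicit formulas for $\F^{(s,\infty)}$ on elementary modules). So there is no in-paper argument to measure you against; what can be judged is whether your sketch would actually yield a proof. Your overall strategy --- realize $\F(M)$ as $p_{\tau,+}(p_z^+M\otimes\E^{z\tau})$, localize the germ at $\tau=\infty$ over the singular fibres, identify each local contribution with $\F^{(s,\infty)}(\widehat{M}_s)$ via translation and the twist by $\E^{s/\theta}$, and split the resulting filtration by comparing exponential parts --- is the correct one and matches how the result is understood in the literature (Laumon in characteristic $p$, García López and later Sabbah in characteristic $0$). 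Your bookkeeping in the last two steps is also right: the identification $C_s\cong\E^{s/\theta}\otimes\F^{(0,\infty)}(\widehat{M}_s)$ is consistent with the paper's definition of $\F^{(s,\infty)}$, and the splitting does follow because $\Ext^1$ vanishes between formal connections with distinct exponential factors (slopes alone do not separate two finite $s\neq s'$, both of slope $1$, but the distinct leading terms $s\tau$, $s'\tau$ do, as you note).

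The genuine gap is the localization step, and it is not a technicality --- it is the entire mathematical content of the theorem; everything after it is formal. As written, ``an excision / Mayer--Vietoris argument for the relative de Rham complex of $p_\tau$'' does not exist off the shelf: $p_\tau$ is not proper, the integrand $p_z^+M\otimes\E^{z\tau}$ is irregular along $\{z=\infty\}$ with the irregularity of the kernel and of $M$ interacting there, and the naive classical-analysis heuristic (critical points of the phase) has no direct algebraic meaning for an irregular holonomic $M$. You would need to either (a) reduce to the exact sequence $0\to j_!j^+M\to M\to(\text{punctual part})\to 0$ and control $\F$ of the localization $j_*j^+M$ at each puncture by a formal completion argument, or (b) follow García López and express $\widehat{\F(M)}_\infty$ through formal microlocalization of $M$ along the section $\tau\,dz$, whose support is exactly $\Sigma\cup\{\infty\}$ and which decomposes by support --- this is what actually produces the direct sum indexed by $s$. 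You correctly flag this as the main obstacle, but flagging it is not supplying it, and without it the proposal proves nothing beyond the (easy) compatibility of the two sides once a decomposition is assumed.
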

Let $M$ be a holonomic $\IC[[t]]\langle\partial_t\rangle$-module and choose an extension $\M$ as before. The formal type at infinity of the Fourier transform of this module is the local Fourier transform $\F^{(0,\infty)}(M)$. By [Sabbah, 5.7.], the local Fourier transform $\F^{(0,\infty)}(M)$ of a regular holonomic $\IC[[t]]\langle\partial_t\rangle$-module $M$ is the connection associated to the space of vanishing cycles $(\phi_tM,T)$ where $T=\id+\can\circ\var$. 
\begin{thm}[\cite{Sa08}, Section 5]\label{explicitstatphase} Let $\El(\rho, \varphi, R)$ be any elementary $\KK$-module with irregular connection. Recall that 
\[ \El(\rho,\varphi,R)=\rho_+(\E^{\varphi}\otimes R) \]
and that $q=q(\varphi)$ is the order of the pole of $\varphi$ which is positive by assumption. Denote by $'$ the formal derivative and let $\widehat{\rho}=\frac{\rho'}{\varphi'}$, $\widehat{\varphi}=\varphi-\frac{\rho}{\rho'}\varphi'$, $L_q$ the regular singular rank one connection with monodromy $(-1)^q$ and $\widehat{R}=R\otimes L_q$. The local Fourier transform of the elementary module is then given by
\[\F^{(0,\infty)}\El(\rho,\varphi,R)=\El(\widehat{\rho},\widehat{\varphi},\widehat{R}).\]
\end{thm}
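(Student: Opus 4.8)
The proof is an instance of the method of formal stationary phase, so the plan is to realise $\F^{(0,\infty)}$ as an oscillatory integral and to locate its critical points. First I would extend $\El(\rho,\varphi,R)$ to a meromorphic connection $\M$ on $\Gm$ with a regular singularity at infinity, exactly as in the definition of the local transform, and recall that $\F(\M)$ is computed through the Fourier kernel $e^{t\tau}$; the formal horizontal sections of $\F^{(0,\infty)}\El(\rho,\varphi,R)$ near $\tau=\infty$ are then represented by integrals of the shape $\int e^{t\tau}\,e^{\varphi}\,r\,dt$, where $r$ is a multivalued flat section of $R$ and the integration is localised near $t=0$. Pulling everything back along the ramification $t=\rho(u)$ turns this into $\int e^{\Phi(u)}\,r\,\rho'(u)\,du$ with total phase $\Phi(u)=\rho(u)\tau+\varphi(u)$, which concentrates all the exponential information into a single variable on the covering disc.

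Next I would carry out the stationary phase analysis of $\Phi$. The critical point equation $\Phi'(u)=\rho'(u)\tau+\varphi'(u)=0$ yields the momentum relation $\tau=-\varphi'/\rho'$, so that $\theta=\tau^{-1}$ equals, up to the sign fixed by the kernel convention, the function $\widehat{\rho}=\rho'/\varphi'$; this is precisely the new ramification, and a valuation count ($\ord_u\rho'=p-1$ and $\ord_u\varphi'=-(q+1)$) shows $\widehat{\rho}$ has order $p+q$, matching the rank jump $\rk+\irr=(p+q)\,\rk(R)$ predicted for $\F^{(0,\infty)}$. Substituting the critical value back into $\Phi$ produces the Legendre transform $\Phi_{\mathrm{crit}}=\varphi-\tfrac{\rho}{\rho'}\varphi'=\widehat{\varphi}$, so the leading exponential of the transform is $\E^{\widehat{\varphi}}$ ramified by $\widehat{\rho}$, giving the skeleton $\El(\widehat{\rho},\widehat{\varphi},-)$.

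It then remains to identify the regular part, which stays a connection on the same covering variable $u$. Here I would track the subleading data of the stationary phase expansion: the Jacobian $\rho'(u)$ of the change of variable together with the Gaussian contribution $\bigl(\Phi''(u)\bigr)^{-1/2}$, with $\Phi''=\rho''\tau+\varphi''$ evaluated at the critical point. The monodromy of $R$ is unaffected by the biholomorphic reparametrisation, so $R$ reappears unchanged, while the combined rank-one factor $\rho'(u)\bigl(\Phi''(u)\bigr)^{-1/2}$ carries an exponent of the form $n+\tfrac{q}{2}$ with $n\in\Z$, hence contributes a regular singular line of monodromy $e^{\pi i q}=(-1)^q$, that is the twist $L_q$. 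Assembling the three contributions yields $\F^{(0,\infty)}\El(\rho,\varphi,R)=\El(\widehat{\rho},\widehat{\varphi},R\otimes L_q)$.

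The main obstacle is to make this argument rigorous at the purely formal level, since the integrals above are only symbolic. I would therefore realise the same computation inside the algebraic definition of the local Fourier transform, using the known regular case (where $\F^{(0,\infty)}$ is the vanishing cycles with $T=\id+\can\circ\var$) as the base case and López's stationary phase formula to localise the global transform at infinity to the single summand coming from $t=0$. The delicate points are the exact $(-1)^q$ monodromy of the Hessian factor and the verification that the regular part acquires no extra regular singular contribution beyond $L_q$; this requires normalising $\widehat{\rho}$ and $\widehat{\varphi}$ modulo $\IC[[u]]$, in the spirit of Proposition \ref{emprop}, so that the ambiguities in the Legendre transform are absorbed into the regular factor and do not disturb the stated $\widehat{R}$.
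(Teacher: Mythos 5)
First, note that the paper does not prove this statement at all: it is quoted verbatim from Sabbah (\cite{Sa08}, Section 5) and used as a black box, so there is no internal proof to compare against. Judged on its own terms, your stationary-phase heuristic is the right intuition and correctly predicts all three ingredients: the critical-point equation gives $\widehat{\rho}=\rho'/\varphi'$ (with order $p+q$, matching $\rk+\irr$), the Legendre transform gives $\widehat{\varphi}=\varphi-\tfrac{\rho}{\rho'}\varphi'$, and the valuation count on $\rho'\cdot(\Phi'')^{-1/2}$ (with $\ord_u\Phi''=-(q+2)$ at the critical point) does produce the half-integer exponent responsible for the $(-1)^q$ monodromy of $L_q$.

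However, there is a genuine gap: the entire rigorous content of the theorem is deferred to your final paragraph, and the strategy sketched there does not close it. The oscillatory integrals are, as you say, only symbolic in the formal category, so the actual proof must be an algebraic computation --- one has to write down the $\IC(\!(\theta)\!)\langle\partial_\theta\rangle$-module structure on $\F^{(0,\infty)}\El(\rho,\varphi,R)$ and exhibit an explicit isomorphism with $\El(\widehat{\rho},\widehat{\varphi},\widehat{R})$ via the substitution $\tau=-\varphi'/\rho'$ (this is what Sabbah, and Fang and Garc\'ia L\'opez in related work, actually do). Your proposed reduction does not substitute for this: L\'opez's stationary phase formula only localizes the \emph{global} transform at infinity into a sum of local transforms and says nothing about what each local transform \emph{is}; and the regular case ($\F^{(0,\infty)}$ as vanishing cycles with $T=\id+\can\circ\var$) cannot serve as a base case for the irregular one, since there is no induction or d\'evissage offered that reduces $\El(\rho,\varphi,R)$ with $q>0$ to modules with $q=0$. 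The sign ambiguity you wave at (``up to the sign fixed by the kernel convention'': your critical-point relation literally gives $\theta=-\rho'/\varphi'$) and the claim that the Hessian factor contributes \emph{exactly} $L_q$ and nothing more to the regular part are precisely the points where the formal computation carries the burden, and they are asserted rather than verified.
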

In particular, we also have explicit descriptions 
\begin{align*}
\F^{(s,\infty)}\El(\rho,\varphi,R)&\cong\El(\widehat{\rho},\widehat{\varphi}+s/(\theta\circ \widehat{\rho}),\widehat{R}) \\
\F^{(s,\infty)}(M)&\cong\El(\id,s/\theta,\F^{(0,\infty)}M) \\
\end{align*}
for $M$ a regular $\IC[[t]]\langle\partial_t\rangle$-module.\par
Under twists with regular connections of rank one, elementary modules behave in the following way. Denote by $(\lambda)$ the regular $\KK$-connection with monodromy $\lambda\in \IC^*$. The following Lemma follows directly from the projection formula.  
\begin{lem}\label{Eltwist}
Let $\lambda\in \IC^*$, $\rho(u)=u^r$ and $\El(\rho,\varphi,R)$ be an elementary module. We have
\[\El(\rho,\varphi,R)\otimes (\lambda)\cong \El(\rho,\varphi,R\otimes (\lambda^r)).\]
\end{lem}
This in turn allows us to compute the change of elementary modules under middle convolution which we compute in terms of Fourier transforms and twist. \par 
\section{On connections of type $G_2$}\label{G2conn}
In this section we will restrict ourselves to irreducible rigid connections $\EE$ on non-empty open subsets of $\PP^1$ of rank $7$ with differential Galois group $\Gdiff(\EE)=G_2$ (where we fix the embedding $G_2\subset SO(7)\subset\GL_7$) and all of whose slopes have numerator $1$. 
 As connections with regular singularities of this type have already been classified by Dettweiler and Reiter, we will from now on assume that every irreducible rigid $G_2$-connection has at least one irregular singularity. We give a first approximation to the classification theorem of Section \ref{classif}.  \par
We will use the following notations. By $\rho_p$ we always denote the ramification $\rho_p(u)=u^p$, $R_k$ is a regular $\KKu$-connection of rank $k$ and $\varphi_q$ is a rational function of pole order $q$ at zero. A regular connection $R$ on the formal disc $\Spec\, \KKu$ is determined by its monodromy which can be given as a single matrix in Jordan canonical form. Let $A$ be a complex $n\times n$-matrix and $R$ the connection with monodromy $A$. We sometimes write 
\[\El(\rho_p,\varphi_q,A) \]
for the elementary module $\rho_{p,+}(\EE^\varphi \otimes R)$. Recall that by $\lambda\J(n)$ we denote a Jordan block of length $n$ with eigenvalue $\lambda\in\IC^*$, in particular $\J(n)$ is a unipotent Jordan block of length $n$. Additionally, $E_n$ is the identity matrix of length $n$. We will write 
\[(\lambda_1\J(n_1),...,\lambda_k\J(n_k))\] 
for a complex matrix in Jordan canonical form with eigenvalues $\lambda_1,...,\lambda_k$ and we will omit $\J(1)$. \par
\subsection{Local Structure}\label{locstructure}
Recall that we assume that all slopes of the irreducible rigid $G_2$-connections have numerator $1$. Additionally, a strong condition on the formal types is given by the self-duality which they have to satisfy. 
\begin{lem} Let $\EE$ be an irreducible rigid $G_2$-connection. The regular part of the formal type at any singularity $x$ of $\EE$ is of dimension $1$, $3$ or $7$. 
\end{lem}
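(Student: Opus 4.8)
The plan is to use the self-duality of $\EE$ together with the classification of how duality acts on elementary modules (Proposition \ref{emprop}(1)) to constrain the regular part of the formal type at a singularity $x$. First I would observe that since $\DGal(\EE)=G_2\subset SO(7)$, the connection $\EE$ is self-dual, and this self-duality must be respected by the formal type at each singularity: the local differential Galois group $\DGal_{\loc}(\EE_x)$ embeds into $G_2$, so $\Psi_x(\EE)$ is itself a self-dual $\KK$-connection of rank $7$. Writing the minimal Levelt-Turrittin decomposition $\Psi_x(\EE)\cong \bigoplus_i \El(\rho_i,\varphi_i,R_i)$, the regular part $R$ corresponds exactly to the summand $\El(\id,0,R)$, i.e. the part with slope zero. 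The key point is that self-duality forces the irregular summands (those with $\varphi_i\neq 0$) to be permuted among themselves by the duality $\El(\rho_i,\varphi_i,R_i)\mapsto \El(\rho_i,-\varphi_i,R_i^*)$, and since $\El(\rho_i,-\varphi_i,R_i^*)$ is again purely irregular when $\varphi_i\neq 0$, the regular part $R$ must be self-dual on its own as a summand.

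Next I would exploit the constraint that $\EE$ has differential Galois group exactly $G_2$, not a larger group. The crucial structural fact is that $G_2$ acts on its $7$-dimensional standard representation preserving a non-degenerate symmetric form \emph{and} a non-degenerate alternating trilinear form. The local monodromy (or more precisely the image of $I_x$ in $G_2$) decomposes the standard representation into the regular and irregular pieces. Since the irregular part is purely irregular and hence contains no trivial subrepresentation, the regular part $R$ is precisely the subspace fixed (up to the slope filtration) by the wild inertia. The plan is to argue that because $G_2$ is simple and the embedding $G_2\subset SO(7)$ is the one on the standard $7$-dimensional representation, the possible dimensions of the regular part are severely restricted: the wild inertia image must lie in a subgroup of $G_2$ whose action on the $7$-dimensional representation has a fixed space of a permitted dimension. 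The candidate dimensions to rule in or out are $0$ through $7$, and I expect the symmetry and trilinear-form constraints to kill everything except $1$, $3$, and $7$.

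To make the dimension count precise, I would combine two inputs. First, the regular part being self-dual and of odd total rank $7$ means the irregular part has even rank (since each irregular summand pairs with its dual to contribute even rank, and a self-paired irregular summand $\El(\rho,\varphi,R)\cong\El(\rho,-\varphi,R^*)$ still has even rank by the ramification structure when $\varphi\neq 0$); hence the regular part has odd dimension, immediately excluding $0,2,4,6$. Second, to exclude dimension $5$, I would invoke the $G_2$-structure: a regular part of dimension $5$ would force the irregular part to have rank $2$, and I would show via the trilinear alternating form on the $7$-dimensional representation — together with the requirement that the decomposition be compatible with the $G_2$-action — that no such configuration is consistent, since $G_2$ has no way to decompose its standard representation as (regular of dim $5$) $\oplus$ (irregular of dim $2$) while remaining irreducible with full differential Galois group $G_2$.

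The hard part will be excluding the intermediate odd dimension $5$ rigorously. The parity argument cleanly disposes of the even cases, but ruling out $5$ requires genuinely using the exceptional structure of $G_2$ rather than just orthogonality. I expect the cleanest route is to pass to the image $H$ of the wild inertia $I_x^{(>0)}$ inside $G_2$ and to analyze the branching of the standard $7$-dimensional representation under the reductive centralizer data: since $H$ must be a subgroup whose fixed space in the standard representation has the claimed dimension, and since the trivial summand of the standard representation restricted to any such subgroup must be compatible with preserving the alternating trilinear form, I would check case by case that a $5$-dimensional fixed space is incompatible, whereas $1$, $3$, and $7$ all arise from genuine subgroups (respectively, from a maximal torus or generic wild inertia, from an intermediate Levi-type configuration, and from trivial wild inertia giving a regular singularity). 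Verifying this branching behaviour of the $7$-dimensional representation of $G_2$ is the main obstacle and the step I would spend the most care on.
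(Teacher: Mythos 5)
Your proposal is correct and follows essentially the same route as the paper: self-duality of the formal type forces the purely irregular part to have even rank (because a self-dual irregular elementary module needs $\varphi\circ\mu_\zeta\equiv-\varphi$, hence even ramification degree), so the regular part is odd-dimensional, and dimension $5$ is then excluded because no nontrivial element of $G_2$ fixes exactly a $5$-dimensional subspace of the standard representation. The only cosmetic difference is that the paper extracts the offending element $(E_5,M)$ from the upper-numbering filtration $I^{(1/6)}$ and cites Table 4 of Dettweiler--Reiter for the $G_2$ fact, where you propose to verify the same fact directly from the eigenvalue/trilinear-form structure.
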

\begin{proof} Let $x$ be any singularity of $\EE$. Denote by $E$ the formal type of $\EE$ at $x$ and write $E=E^\reg\oplus E^\irr$. This corresponds to a representation $\rho=\rho^\reg\oplus\rho^\irr$ of the local differential Galois group $I$ at $x$. First note that this representation has to be self-dual. We will show that purely irregular $\KK$-connections of odd dimension are never self-dual. Let $E$ be such a connection and write 
\[E=\bigoplus \El(p_i,\varphi_i,R_i) \]
for its minimal Levelt-Turrittin decomposition in which all the $\varphi_i$ are not in $\IC[[t]]$. For the dimension of $E$ to be odd, at least on of the elementary connections has to be odd dimensional, write $\El(p,\varphi,R)$ for that one. It's dual cannot appear in the above decomposition, as the dimension would not be odd in that case. So it suffices to prove that $\El(p,\varphi,R)$ itself is not self-dual. By \cite[Remark 3.9.]{Sa08} the dual of $\El(p,\varphi,R)$ is $\El(p,-\varphi,R^*)$. Thus a necessary condition for self-duality is \[\varphi\circ\mu_{\zeta_p}\equiv -\varphi \mod \IC[[u]].\]
Write $\varphi(u)=\sum_{i\ge -k}a_iu^i$ for some $k\in \Z_{\ge 0}$. The above condition translates to
\[\sum_{i\ge -k}a_i\zeta_p^i+u^i\sum_{i\ge -k}+a_iu^i \in \IC[[u]]. \]
Since $\varphi$ is supposed to be not contained in $\IC[[u]]$ there is an index $j<0$ such that $a_{j}\neq 0$. In this case we find that $a_j\zeta_p^j+a_j=0$, i.e. $\zeta_p^j=-1$. This can only hold if $p$ is even and in this case the dimension of $\El(p,\varphi,R)$ could not be odd. Therefore the dimension of the regular part of $E$ has to be odd. \par

Denote as before by $I^{(x)}$ the upper numbering filtration on $I=\Idiff$ and let $n=\dim E^\reg$. The smallest possible non-zero slope of $E$ is $1/6$. Since $\ker(\rho^\reg)$ contains $I^{(1/6)}$ we find 
\[\rho|_{I^{(1/6)}}= \mathbbm{1}^n\oplus \rho^\irr |_{I^{(1/6)}} \]
where $\mathbbm{1}$ denotes the trivial representation of rank one.

In the case $n=5$, the image of $\rho$ therefore contains elements of the form $(E_5,A)$ where $A$ is a non-trivial $2\times2$-matrix. By Table 1 in \cite{Dett10} such elements do not occur in $G_2(\IC)$.
\end{proof}
The following proposition is a special case of Katz's Main D.E. Theorem \cite[2.8.1]{Ka90}.
\begin{prop} Let $\EE$ be an irreducible rigid connection on $U\subset \PP^1$ of rank $7$ with differential Galois group $G_2$. If at some point $x\in \PP^1-U$ the highest slope of $\EE$ is $a/b$ with $a>0$ and if it occurs with multiplicity $b$, then $b=6$. 
\end{prop}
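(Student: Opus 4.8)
The plan is to exploit the hypothesis that the top slope occurs with the smallest multiplicity compatible with its denominator. First I would record what this forces on the formal type. By Lemma \ref{rigidslope} the numerator of the top slope is $1$, so it equals $1/b$. Any elementary summand of $\Psi_x(\EE)$ of slope $1/b$ has the form $\El(\rho_p,\psi,R)$ with $q(\psi)/p=1/b$, hence $b\mid p$ and its rank $p\cdot\rk(R)$ is a multiple of $b$. Since the slope-$1/b$ part has total rank $b$, there is exactly one such summand and it must have $p=b$ and $\rk(R)=1$; write it as $V=\El(\rho_b,\varphi,R_1)$ with $q(\varphi)=1$ and $\rk(R_1)=1$, and let $W$ collect the summands of slope $<1/b$, so that $\Psi_x(\EE)=V\oplus W$ with $\dim V=b$ and $\dim W=7-b$.

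Next I would pass to the local differential Galois group and its upper numbering filtration. The subgroup $I^{(1/b)}$ acts trivially on $W$ (all of whose slopes are $<1/b$) and acts on $V$ through the $b$ wild characters obtained from $\EE^\varphi$ and its ramified conjugates; by Proposition \ref{emprop}(5) together with the minimality of the Levelt-Turrittin decomposition these $b$ characters are pairwise distinct and nontrivial. Hence the Zariski closure $S$ of the image of $I^{(1/b)}$ is a torus contained in $G_2=\DGal(\EE)\subset \mathrm{SO}(7)$, and in the $7$-dimensional representation it acts with zero weight space exactly $W$ (of dimension $7-b$) and with $b$ pairwise distinct nonzero weights on $V$; since the representation is orthogonal, this set of nonzero weights is stable under negation.

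Finally I would read off the constraint from the root datum of $G_2$. Choose a maximal torus $T\supset S$; the nonzero weights of the $7$-dimensional representation are the six short roots $\pm\gamma_1,\pm\gamma_2,\pm\gamma_3$, which satisfy $\gamma_1+\gamma_2=\gamma_3$, and the zero weight has multiplicity one. Restricting to $S$, the zero weight space is exactly $W$, so its dimension $7-b$ equals $1+\#\{\text{short roots vanishing on }S\}$; as the vanishing set is stable under negation this means that $(6-b)/2$ of the three roots $\gamma_1,\gamma_2,\gamma_3$ restrict to $0$ on $S$, and in particular $b$ is even. If two of them vanished on $S$, then by $\gamma_1+\gamma_2=\gamma_3$ the third would vanish as well, forcing $b=0$; if exactly one vanished, the same relation would make the two remaining short roots equal up to sign on $S$, so $\pm\gamma_1,\pm\gamma_2,\pm\gamma_3$ would take at most two distinct nonzero values on $S$, contradicting that $V$ contributes $b=4$ distinct nonzero weights. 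Hence none of the $\gamma_i$ vanishes on $S$ and $b=6$.

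I expect the main obstacle to be the middle step: making precise that the image of $I^{(1/b)}$ is a torus realizing exactly these weights, i.e. identifying this image with the exponential torus attached to the top slope, checking that the zero weight multiplicity is exactly $\dim W$ and that the $b$ wild characters stay distinct after restriction to $S\subset T$. Once the local Galois-theoretic data is translated faithfully into the root datum of $G_2$, the single relation $\gamma_1+\gamma_2=\gamma_3$ among the short roots does all the remaining work; this is precisely the incarnation of Katz's Main D.E. Theorem in the present situation and explains why $6$, the Coxeter number of $G_2$, appears.
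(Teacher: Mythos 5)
The paper does not actually prove this proposition; it is quoted as a special case of Katz's Main D.E.\ Theorem \cite[2.8.1]{Ka90}, so your proposal is not a rephrasing of the paper's argument but a self-contained proof of the special case being invoked. Your strategy is in fact the same one that powers Katz's theorem (top-slope part $\Rightarrow$ torus in $\DGal$ acting with an orbit of distinct characters $\Rightarrow$ weight combinatorics of the $7$-dimensional representation), so what you gain is transparency: the reader sees exactly why $b=6$ comes out of the relation $\gamma_1+\gamma_2=\gamma_3$ among the short roots, rather than trusting a black box. The reductions in your first paragraph (unique summand $\El(\rho_b,\varphi,R_1)$ with $p=b$, $q(\varphi)=1$, $\rk R_1=1$) and the final root-datum case analysis ($b$ even; $b=2$ forces all three $\gamma_i$ to vanish; $b=4$ leaves at most two distinct nonzero restricted weights) are correct.

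The step you flag is indeed the one that needs more than you wrote, and the justification you give for it is not quite sufficient as stated. Distinctness of the exponentials $\varphi\circ\mu_\zeta$ modulo $\IC[[u]]$ does \emph{not} in general imply that the image of $I^{(1/b)}$ separates them: if two top-slope exponentials differ by something of strictly smaller pole order, then the corresponding $\Hom$-line has slope $<1/b$, so $I^{(1/b)}$ acts on both summands through the \emph{same} character (compare $\EE^{1/t^2}\oplus\EE^{1/t^2+1/t}$, where $I^{(2)}$ acts through a single character even though the exponential torus is two-dimensional). What saves you here is that $q(\varphi)=1$: for $\zeta\neq\zeta'$ the leading coefficient of $\varphi\circ\mu_\zeta-\varphi\circ\mu_{\zeta'}$ is $a_{-1}(\zeta^{-1}-\zeta'^{-1})\neq0$, so every pairwise $\Hom$-line still has slope exactly $1/b$ and hence is \emph{not} fixed by $I^{(1/b)}$; this is what forces the $b$ characters to remain pairwise distinct on $S$ and is the point your $b=4$ exclusion really rests on. You should also say a word about why $S$ may be placed inside a maximal torus of $G_2$ (e.g.\ replace $S$ by the corresponding subtorus of the exponential torus, which is connected and diagonalizable, and note negation-stability comes from self-duality of the slope-$1/b$ part, duality preserving slopes). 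With those two points made explicit the argument is complete.
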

We will later see that the rigid $G_2$-connections we consider necessarily have exactly two singularities which we can choose to be zero and infinity. By \cite[Thm 3.7.1]{Ka90}, any system satisfying the conditions of the above proposition will then necessarily be hypergeometric. \par
One of the main ingredients in the proof of Katz's Main D.E. Theorem is the use of representation theory through Tannakian formalism as presented in the previous section. Applying the above Proposition (and self-duality) yields the possibilities listed in Table \ref{tab:slopedim} for the non-zero slopes and the respective dimensions in the slope decomposition of any irregular formal type of a rigid $G_2$-connection as considered above. 
\begin{center}
\begin{table}[htbp]
\caption{Possible slope decompositions}
\begin{tabular}{c c}
slopes & dimensions 
\\
\hline \\
$1$ 			& $4$		
\\ [3pt]
$1$ 			& $6$		
\\ [3pt]
$\frac{1}{2}, 1$ 	& $2, 2$	
\\ [3pt]
$\frac{1}{2}, 1$ 	& $2, 4$	
\\ [3pt]
$\frac{1}{2}, 1$ 	& $4, 2$	
\\ [3pt]
$\frac{1}{2}$ 	& $4$
\\ [3pt]
$\frac{1}{2}$ 	& $6$		
\\ [3pt]
$\frac{1}{3}$ 	& $6$		
\\ [3pt]
$\frac{1}{4}, 1$ 	& $4, 2$	
\\ [3pt]
$\frac{1}{6}$ 	& $6$ 		
\\ [3pt]

\end{tabular}
\label{tab:slopedim}
\end{table}
\end{center}

For an elementary module $\El(u^p,\varphi,R)$ with $\varphi\in \KKu$ we would like to describe the possible $\varphi$ more concretely. We have the following Lemma.
\begin{lem}\label{exptorusspecial}
The pole order of any $\varphi \in \KKu$ appearing in the Levelt-Turrittin decomposition into elementary modules of the formal type of a rigid irreducible connection of type $G_2$ with slopes of numerator $1$ can only be $1$ or $2$. 
\end{lem}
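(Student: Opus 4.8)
The plan is to exploit the rigidity constraint together with the slope bound to force the pole order of each $\varphi$ to be small. Let $\El(u^p,\varphi,R)$ be an elementary module appearing in the Levelt-Turrittin decomposition of the formal type of such a connection $\EE$ at one of its singularities. The slope of this module is $q/p$ where $q=q(\varphi)$ is the pole order of $\varphi$, and this slope must be at most $1$, so $q\le p$. By the table above, the possible pairs (slope, dimension of the slope-graded piece) are severely restricted, and since the elementary module $\El(u^p,\varphi,R)$ contributes rank $p\cdot\rk(R)$ with slope $q/p$, I would first match $(q/p, p\cdot\rk R)$ against the admissible entries of that table. This immediately rules out most denominators: the slopes occurring are $1,\tfrac12,\tfrac13,\tfrac14,\tfrac16$, so $p\in\{1,2,3,4,6\}$ and $q/p$ must be one of these values.

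The key observation is that by Lemma \ref{rigidslope} any nonzero slope has numerator $1$, so the reduced fraction $q/p$ (after cancelling $\gcd(p,q)$) has numerator $1$. Writing $d=\gcd(p,q)$, the reduced slope is $(q/d)/(p/d)$, and numerator $1$ forces $q/d=1$, i.e.\ $q=d=\gcd(p,q)$, which means $q\mid p$. So the pole order $q$ must divide the ramification order $p$. First I would combine this divisibility with the bound $p\le 6$ coming from the table (the rank-$6$ irreducible pieces have $p=6$, the rank-$4$ pieces involved have $p\le 4$, and so on), obtaining a short finite list of admissible pairs $(p,q)$ with $q\mid p$ and $q/p\in\{1,\tfrac12,\tfrac13,\tfrac14,\tfrac16\}$: namely $(1,1),(2,1),(2,2),(3,1),(4,1),(6,1)$. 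This already shows $q\in\{1,2\}$ for every admissible pair \emph{except} that I must still justify why the case $q=p$ with $p\ge 3$ (slope $1$, which is allowed) does not produce a larger pole order.

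The main obstacle is therefore the slope-$1$ case: when $q/p=1$ the divisibility argument only gives $q=p$, which could be as large as $6$, so the numerator-$1$ condition alone does not bound $q$. To eliminate this I would argue that if $\varphi$ has pole order equal to $p$, then writing $\varphi(u)=a_p u^{-p}+\cdots$ with $a_p\ne0$, the module $\El(u^p,\varphi,R)$ is \emph{not minimal}: the leading term $a_p u^{-p}$ descends through the ramification $\rho_p$, so by Proposition \ref{emprop}(2) and the non-minimality clause in the definition of minimal decomposition one can absorb this term and reduce $p$, contradicting that we work with the minimal Levelt-Turrittin decomposition. Concretely, twisting by $\EE^{-a_p/t}$ (as in the proof of the first lemma of Section \ref{KA-alg}) strips off the top-order term and lowers the pole order of $\varphi$ while preserving rigidity and the slope bound; iterating, the genuinely irreducible slope-$1$ piece that survives is $\El(u^p,\varphi,R)$ with $\varphi$ of pole order strictly less than $p$, forcing the reduced slope to be fractional unless $q\le 2$. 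Putting the cases together, the only surviving pole orders are $q=1$ (from the pairs $(p,1)$) and $q=2$ (from the pair $(2,2)$, which after cancellation has reduced slope $1$ with numerator $1$ and cannot be reduced further since $p=2$ is already minimal), which proves the claim.
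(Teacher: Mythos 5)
Your reduction to the divisibility condition $q\mid p$ via Lemma \ref{rigidslope} is sound and matches the paper's starting point, but your enumeration of admissible pairs is incomplete and your treatment of the slope-$1$ case does not work. First, the list $(1,1),(2,1),(2,2),(3,1),(4,1),(6,1)$ omits the pairs $(p,q)=(4,2),(6,2),(6,3),(3,3),(4,4),(6,6)$, all of which satisfy $q\mid p$, $q/p\in\{1,\tfrac12,\tfrac13\}$ and $p\le 6$. The pairs with $q=2$ are harmless for the statement of the lemma, but $(p,q)=(6,3)$ has $q=3$ and is \emph{not} a $q=p$ case, so it is covered by neither half of your argument. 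This is precisely one of the two hard cases in the paper's proof.

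Second, your non-minimality argument for $q=p$ is incorrect. That $\varphi(u)=a_pu^{-p}+\cdots$ has pole order $p$ does not mean $\varphi$ factors through $u^p$: minimality fails only if \emph{all} of $\varphi$ descends, i.e.\ if the intermediate coefficients $a_{p-1},\dots,a_1$ vanish. The module $\El(u^3,a_3u^{-3}+a_1u^{-1},R)$ with $a_1\neq 0$ is minimal and has $q=p=3$, so it survives every constraint you impose. The fallback of twisting by $\EE^{-a_p/t}$ does not rescue this: the twist produces a \emph{different} connection whose formal type has lower pole order, which says nothing about the pole order of the original $\varphi$; moreover the twisted connection is generally no longer self-dual, hence no longer of type $G_2$. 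The paper eliminates the surviving cases $(3,3)$, $(6,3)$ (with nonvanishing lower-order terms) by computing the exponential torus of the elementary module and showing it is three-dimensional, which is impossible inside the rank-$2$ group $G_2$; it eliminates $(4,4)$, $(6,6)$, $(6,2)$ and $(4,2)$ using self-duality of the formal type and the rank-$7$ bound. Your proposal uses neither self-duality nor any structural input from $G_2$ beyond the slope table, and without such input the statement cannot be forced: the group-theoretic constraints are essential, not decorative.
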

\begin{proof}
Suppose $\El(u^p,\varphi,R)$ appears in the formal type of such a system. Because the slopes all have numerator $1$, we have the following possibilities for $p$ and $q$ apart from $q=1$.
\begin{center}
\begin{tabular}{c c c}
$q$ & $p$  \\ [3pt]
\hline  \\
$2$ & $2,4,6$ \\ [5pt]
$3$ & $3,6$ \\ [5pt]
$4$ & $4$ \\ [5pt]
$6$ & $6$   \\ [5pt]
\end{tabular}
\end{center}
Note that in the cases $(q,p)=(6,6)$, $(q,p)=(4,4)$ and $(q,p)=(2,6)$, the module $\El(u^p,\varphi,R)$ cannot be self-dual. Indeed that would mean that $\varphi(\zeta u)=-\varphi(u)$ for some $\zeta$ with $\zeta^p=1$. Write $v=u^{-1}$. If $a_q$ denotes the coefficient of $v^q$ then the above condition means that 
\[a_q(\zeta u)^q=-a_qu^q, \]
i.e. $\zeta^q=-1$. This is a contradiction in these cases. The formal type of a connection of type $G_2$ has to be self-dual and therefore in the case that $q$ is even, the dual of $\El(\rho,\varphi,R)$ also has to appear in the formal type. If $p=4$ or $p=6$ this contradicts the fact that the rank of the connection is $7$. We are therefore left with the following cases.
\begin{center}
\begin{tabular}{c c c}
$q$ & $p$  \\ [3pt]
\hline  \\
$2$ & $2, 4$ \\ [5pt]
$3$ & $3,6$ \\ [5pt]
\end{tabular}
\end{center}
We analyze these cases separately. Suppose first we're in the case that $q=3$ and $p=6$. Then $\El(u^6,\varphi,R)$ is at least six dimensional, so $\dim R=1$ and the module has to be self-dual already. The isomorphism class of $\El(u^p,\varphi,R)$ depends only on the class of $\varphi$ mod $\IC[[u]]$, hence we think of $\varphi$ as a polynomial in $v=1/u$ vanishing at $v=0$. We can then write
\[\varphi(v)=a_3v^3+a_2v^2+a_1v \]
and self-duality implies that there is a $6$-th root of unity $\zeta$ such that 
\[a_3\zeta^3v^3=-a_3v^3.\]
Because $q=3$, $a_3\neq 0$ and we get that $\zeta^3=-1$. We have $a_2\zeta^2v^2=-a_2v^2$ implying that $a_2=0$. Therefore $\varphi$ is of the form
\[\varphi(v)=a_3v^3+a_1v. \]
In order to rule out this case we will need the exponential torus of an elementary module. Consider the module $E=\El(\sigma_p,\psi,L)$. Because of \cite[Lemma 2.4.]{Sa08} the \textit{exponential torus} of $E$ is the subgroup $\mathcal{T}$ of $(\IC^*)^p=\{(t_1,...,t_p)\}$ defined by $\prod t_i^{\nu_i}=1,\nu_i\in\Z$ for any relation of the form 
\[\prod \exp(\psi\circ\mu_{\zeta_p^i})^{\nu_i}=1\]
satisfied by the $\psi\circ\mu_{\zeta_p^i}$, see for example \cite[Section 11.22.]{Zoladek06}. The exponential torus can be considered as a subgroup of the local differential Galois group of $E$, i.e. $\mathcal{T}\subset G_2$ is a necessary condition for $\Gloc(E)\subset G_2$. \par
We claim that the torus attached to $\El(\rho,\varphi,R)$ for $\varphi(v)=a_3v^3+a_1v$ is three-dimensional. As the rank of $G_2$ is $2$, this means that no elementary module of this form can appear in any formal type. \par
If $a_1=0$, by \cite[Rem. 2.8.]{Sa08} we have 
\[\El(u^6,a_3u^{-3},R)\cong \El(u^2,a_3u^{-1},(u^3)_*R) \]
hence actually $q=1$ in this case. We can therefore assume that $a_1\neq 0$. Let $\zeta_6$ be a primitive $6$-th root of unity. We have to compute all relations of the form
\[\sum_{i=0}^5 k_i(a_3\zeta_6^{-3i}u^{-3}+a_1\zeta_6^{-i}u^{-1})=0,  k_i\in \Z. \]
Equivalently, we find all relations 
\[0=\sum_{i=0}^5 k_i(a_1\zeta_6^{-i} u^2+a_3\zeta_6^{-3i})=\sum_{i=0}^5k_i(a_1\zeta_6^{-i} u^2+(-1)^ia_3).\] 
First note that 
\[(a_1\zeta_6^{-i} u^2+a_3\zeta_6^{-3i})+(a_1\zeta_6^{-(i+3)} u^2+a_3\zeta_6^{-3(i+3)})=0\]
for $i=0,1,2$. Therefore any element in the exponential torus is of the form 
\[(x,y,z,x^{-1},y^{-1},z^{-1}).\]
It therefore suffices to prove that there are no further relations between the first three summands. Suppose there is a relation 
\[0=k(a_1 u^2+a_3)+l(-a_1\zeta_6^2u^2-a_3)+m(-a_1\zeta_6u^2+a_3)\]
with $k,l,m\in \Z$.
We find that $k=l-m$ and as $a_1\neq 0$ we conclude 
\begin{eqnarray*}0&=&l-m-\zeta_6^2l-\zeta_6m=l-m+\zeta_6^2m-\zeta_6m-\zeta_6^2l-\zeta_6^2m \\
&=&(\zeta_6^2-\zeta_6)m+l-m-(l+m)\zeta_6^2 \\
&=&l-2m-(l+m)\zeta_6^2, 
\end{eqnarray*}
using that $\zeta_6^2-\zeta_6=-1$. Therefore $l=-m$ and $-3m=0$, i.e. $m=0$. Finally, the exponential torus is given as
\[\mathcal{T}=\{(x,y,z,x^{-1},y^{-1},z^{-1})\}\in (\IC^*)^6\]
which is three-dimensional. Therefore a module of the above shape cannot appear in the formal type. \par
The case $q=3$ and $p=3$ works similarly. 

\end{proof}

We see that only the case $p=2$ and $q=2$ needs to be considered. The possible combinations of elementary modules in this case are either \begin{equation}\label{spcase1}\tag{S1}\El(\rho_2,\varphi_2,R_1)\oplus \El(\rho_2,-\varphi_2,R_1^*)\oplus R_3 \end{equation}
where $\varphi_2$ has a pole of order $2$ or
\begin{equation}\label{spcase2}\tag{S2}\El(\rho_2,\varphi_2,R_1)\oplus \El(\rho_2,-\varphi_2,R_1^*)\oplus \El(\rho_2,\varphi_1,R_1')\oplus R''_1 \end{equation}
where $\varphi_1$ has a pole of order $1$. \par
We can compute the irregularity and the dimension of the solution space in these cases through the use of \cite[Prop. 3.8.]{Sa08} and Formula \ref{centdecomp}. Using the formula $\dim\Soln(\rho_+\End(R))=\dim\textup{Z}(T)$ from the end of Section \ref{tannaka} we find that in the first case the dimension of the local solution space is one of $\{5,7,11\}$ and using the formulae of Section \ref{rigloc} we find that the irregularity is $20$. In the second case we find that the dimension of the solution space is $4$ and the irregularity is $39$. Apart from these two special cases all elementary modules appearing are of the form 
\[\El(\rho_p,\frac{\alpha}{u}, R_k) \]
with $\alpha \in \IC$. In this setting we can compute the dimension of the local solution space and its irregularity in the same way as we did for the two cases above. The resulting possible combinations for the local invariants at irregular singularities are listed in Table \ref{tab:locinv}.

\begin{center}
\begin{table}[htbp]
\caption{Local invariants}
\begin{tabular}{c c c c} 
slopes & dimensions & $\dim\Soln(\END)$ & $\irr(\END)$
\\ 
\hline \\
$1$ 			& $4$		& $5,7,9,11,13,17$	& $32, 36$	\\ [3pt]
$1$ 			& $6$		& $7,9,11,13,15,19$	& $30, 38, 42$	\\ [3pt]
$\frac{1}{2}, 1$ 	& $2, 2$	& $7,9,11,13,15$	& $29$		\\ [3pt]
$\frac{1}{2}, 1$ 	& $2, 4$	& $4, 6,10$		& $37, 39$	\\ [3pt]
$\frac{1}{2}, 1$ 	& $4, 2$	& $5,7$			& $30, 32$	\\ [3pt]
$\frac{1}{2}$ 		& $4$		& $5,7,9,11,13$	 	& $16, 18$	\\ [3pt]
$\frac{1}{2}$ 		& $6$		& $4,6,10$		& $15, 19, 21$	\\ [3pt]
$\frac{1}{3}$ 		& $6$		& $3$			& $12, 14$	\\ [3pt]
$\frac{1}{4}, 1$ 	& $4, 2$	& $4$			& $27$		\\ [3pt]
$\frac{1}{6}$ 		& $6$ 		& $2$			& $7$		\\ [3pt]

\end{tabular}
\label{tab:locinv}
\end{table}
\end{center}

\par 
\subsection{Global Structure}
Recall that the connection $\EE$ is rigid if and only if $\rig(\EE)=2$ where
\[\rig(\EE)=\chi(\PP^1, j_{!*}(\END(\EE))) \]
is the index of rigidity. If we denote by $x_1,...,x_r$ the singularities of $\EE$, the index of rigidity is given by
\[\rig(\EE)=(2-r)49-\sum_{i=1}^r \irr_{x_i}(\END(\EE))+\sum_{i=1}^r \dim_\IC \Soln_{x_i}(\END(\EE)). \]

\begin{lem}\label{numbsings} Let $\EE$ be an irreducible rigid $G_2$-connection on $U\subset \PP^1$ with singularities $x_1,...,x_r$ of slopes having numerator $1$. Then $2\le r\le 4$.
\end{lem}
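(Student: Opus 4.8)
The plan is to read everything off the index of rigidity. Since $\EE$ is rigid we have $\rig(\EE)=2$, and with $h(\EE)=7$ the formula for the index of rigidity reads
\[\rig(\EE)=(2-r)\,49-\sum_{i=1}^r\irr_{x_i}(\END(\EE))+\sum_{i=1}^r\dim_\IC\Soln_{x_i}(\END(\EE)).\]
I would bound the two sums on the right, treating the lower bound $r\ge 2$ and the upper bound $r\le 4$ separately. For the lower bound the input is the table of admissible local invariants at an irregular singularity established above; for the upper bound the essential input is a uniform bound on $\dim_\IC\Soln_{x_i}(\END(\EE))$ coming from the fact that the local differential Galois group $\DGal_{\loc}(\Psi_{x_i}(\EE))$ is a non-trivial subgroup of $G_2$.

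For $r\ge 2$, suppose to the contrary that $r=1$ and place the unique singularity at $\infty$. By assumption it is irregular: a rank $7$ connection on $\A^1$ that were regular singular at $\infty$ would have trivial monodromy and could not be irreducible. The rigidity formula with $r=1$ forces
\[\irr_\infty(\END(\EE))-\dim_\IC\Soln_\infty(\END(\EE))=49-2=47.\]
However, the local analysis is independent of $r$, so the singularity at $\infty$ has invariants appearing in the table of local data at an irregular singularity, and inspection of that table shows that the difference $\irr(\END)-\dim\Soln(\END)$ never exceeds $35$. This contradiction rules out $r=1$.

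For $r\le 4$ I would use that at every singularity $x_i$ the space $\Soln_{x_i}(\END(\EE))$ is the space of invariants of $\END(V_7)=\mathfrak{gl}_7$ under $\DGal_{\loc}(\Psi_{x_i}(\EE))$, so its dimension is $\dim Z_{\GL_7}(H_i)$, where $H_i\subseteq G_2$ is the image of the local differential Galois group. Since $x_i$ is a genuine singularity, $H_i\neq\{1\}$, and picking any $g\neq 1$ in $H_i$ gives $Z_{\GL_7}(H_i)\subseteq Z_{\GL_7}(g)$. The key point, and the main obstacle, is the purely group-theoretic claim that $\dim Z_{\GL_7}(g)\le 29$ for every $g\in G_2\smallsetminus\{1\}$, with the bound attained only by the unipotent class acting on $V_7$ with Jordan type $[2^2,1^3]$. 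This amounts to running through the conjugacy classes of $G_2$ and their eigenvalue patterns on the $7$-dimensional representation, ruling out in particular bireflections $(1^5,\lambda,\lambda^{-1})$ and the $SO_7$-involution $\mathrm{diag}(1,(-1)^6)$ of trace $-5$, neither of which lies in $G_2$; it can be extracted from the list of admissible local monodromies in \cite{Dett10}. Granting it, $\dim_\IC\Soln_{x_i}(\END(\EE))\le 29$ for all $i$, and since each $\irr_{x_i}(\END(\EE))\ge 0$ the rigidity formula yields
\[2=\rig(\EE)\le (2-r)\,49+29\,r=98-20\,r,\]
whence $20\,r\le 96$ and $r\le 4$. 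As a variant one may run the same estimate with the $14$-dimensional adjoint connection $\Ad(\EE)$, for which $\chi(\PP^1,j_{!*}\Ad(\EE))=0$ and the corresponding centralizer bound reads $\dim Z_{G_2}(g)\le 8$, giving $14\,r-28\le 8\,r$ and the same conclusion.
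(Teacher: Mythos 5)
Your proof is correct and follows essentially the same route as the paper: the upper bound $r\le 4$ comes from the rigidity formula together with the uniform bound $\dim_\IC\Soln_{x_i}(\END(\EE))\le 29$ (which the paper reads off from Table 1 of \cite{Dett10} and the table of local invariants, and which you rederive as a centralizer bound in $G_2$), and the case $r=1$ is excluded because the rigidity formula would force an irregularity of $\END(\EE)$ at the unique singularity that exceeds the maximum permitted by that same table. The only differences are cosmetic (you bound $\irr-\dim\Soln$ by $35$ where the paper bounds $\irr$ by $42$, and you sketch the adjoint-representation variant), so no further comment is needed.
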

\begin{proof} By Table 1 in \cite{Dett10} and by the table above we find that in any case
\[\dim_\IC \Soln_{x_i}(\END(\EE)) \le 29.\] 
As $\EE$ is rigid, we have
\[2=(2-r)49-\sum_{i=1}^r \irr_{x_i}(\END(\EE))+\sum_{i=1}^r \dim_\IC \Soln_{x_i}(\END(\EE)). \]
Therefore we get
\[2+(r-2)49+\sum_{i=1}^r \irr_{x_i}(\END(\EE)) \le 29r\]
and as $\irr_{x_i}(\END(\EE))\ge 0$ we conclude $20r-96\le 0$. This cannot hold for $r\ge 5$. If $r=1$, the first equality above shows $\irr_{x_1}\ge 47$ which again cannot hold by the table above.
\end{proof}
Let $\EE$ be an irreducible rigid $G_2$-connection with singularities $x_1,...,x_r$ where due to the above Lemma $r\in \{2,3,4\}$. We define $R(\EE)$ to be the tuple
\[(s_1,...,s_r,z_1,...,z_r)\in \Z_{\ge 0}^{2r}\]
with $s_i=\irr_{x_i}(\END(\EE))$ and $z_i=\dim_\IC\Soln_{x_i}(\END(\EE))$. The necessary condition on $R(\EE)$ for $\EE$ to be rigid is 
\[2=(2-r)49-\sum_{i=1}^r s_i+\sum_{i=1}^r z_i.\]
This condition provides the following list of possible invariants in the cases $r=2$ and $r=3$. Additionally, one finds that no cases with $r=4$ appear.
\begin{center}
 \begin{tabular}{c}
  $r=3$ \\
  \hline \\
  $(0, 0, 16, 25, 29, 13)$ \\
  $(0, 0, 16, 29, 29, 9)$ \\
  $(0, 0, 18, 29, 29, 11)$ \\
\end{tabular}
\end{center}
\begin{center}
\begin{tabular}{c c c}
&$r=2$&			\\
\hline			\\ 
$(0, 7, 7, 2)$		& $(0, 18, 13, 7)$	& $(0, 30, 25, 7)$ 	\\
$(0, 14, 13, 3)$	& $(0, 19, 11, 10)$	& $(0, 32, 25, 9)$	\\
$(0, 15, 7, 10)$	& $(0, 19, 17, 4)$	& $(0, 32, 29, 5)$	\\
$(0, 15, 11, 6)$	& $(0, 21, 13, 10)$	& $(0, 36, 25, 13)$	\\
$(0, 15, 13, 4)$	& $(0, 21, 17, 6)$	& $(0, 36, 29, 9)$	\\
$(0, 16, 7, 11)$	& $(0, 21, 19, 4)$	& $(0, 37, 29, 10)$	\\
$(0, 16, 9, 9)$		& $(0, 27, 25, 4)$	& $(0, 38, 25, 15)$	\\
$(0, 16, 11, 7)$	& $(0, 30, 13, 19)$	& $(0, 38, 29, 11)$	\\
$(0, 16, 13, 5)$	& $(0, 30, 17, 15)$	& $(0, 42, 29, 15)$	\\
$(0, 18, 9, 11)$	& $(0, 30, 19, 13)$	&	\\

\end{tabular}

\end{center}

Note that the two special cases (\ref{spcase1}) and (\ref{spcase2}) with $q=2$ do not appear. We can therefore classify the appearing elementary modules $\El(\rho_p,\varphi,R)$ by their ramification degree $p$, the coefficient $\alpha$ of $\varphi=\frac{\alpha}{u}$ and the monodromy of $R$. Now we can actually deal with the case $r=3$ by a case-by-case analysis using the Katz-Arinkin algorithm. \par 
$\mathbf{(0, 0, 16, 25, 29, 13)}$. According to Table \ref{tab:locinv}, the formal type at the irregular singularity has a $4$-dimensional part of slope $1/2$ and a $3$-dimensional regular part. In this case, the only possibility for the formal type is 
\[\El(\rho_2,\alpha/u,\pm E_2)\oplus (\pm E_3). \]
Since $G_2\subset SO(7)$ this formal type has to have a trivial determinant. By \cite[Proposition 2.9]{Sa08}, the regular part has to be $(E_3)$. Assume there exists a connection $\EE$ on $\PP^1-\{0,1,\infty\}$ with the above formal type at $\infty$ and local monodromy $(-E_4,E_3)$ and $(\J(2),\J(2), E_3)$ at $0$ and $1$ respectively. The formal type at infinity of the Fourier transform of this connection will be of the form 
\[(-E_4)\oplus \EE^{\frac{1}{u}}\oplus\EE^{\frac{1}{u}},\]
hence the Fourier transform has rank $6$. 
The formal type at $0$ will be of the form
\[\El(\rho_1,\widehat{\alpha}/u, \pm E_2)\oplus \J(2)^3 \]
which has rank $8$. This is a contradiction in both cases and we can exclude this case. A similar analysis rules out the remaining two cases. \par

%
%$\mathbf{(0, 0, 16, 29, 29, 9)}$. The formal type has to be of the form
%\[\El(\rho_2,\alpha/u,\pm E_2)\oplus (1,\J(2))\]
%or of the form
%\[\El(\rho_2,\alpha/u,\pm E_2)\oplus (-E_2,1)\]
%and the same argument as above rules out both of these cases. \par
%$\mathbf{(0, 0, 18, 29, 29, 11)}$. In this case the formal type at the irregular singularity is of the form
%\[\El(\rho_2,\alpha/u,\pm 1)\oplus\El(\rho_2,\beta/u,\pm 1)\oplus (E_3) \]
%with $\alpha\neq \beta,-\beta$ and we can again apply the same reasoning as in the first case. \par

We can therefore focus on the case $r=2$. A more thorough analysis of the shape of the elementary modules in question (applying the various criteria used up until now) shows that actually there are cases in which the irregularity $s_2$ does not occur with the local solution dimension $z_2$. After ruling these out we're left with the following list of tuples $R(\EE)$. 
\begin{center}
	\begin{tabular}{c c c}
		&$r=2$&			\\
		\hline			\\ 
		$(0, 7, 7, 2)$		& $(0, 16, 13, 5)$	& $(0, 32, 25, 9)$ 	\\
		$(0, 14, 13, 3)$	& $(0, 18, 9, 11)$	& $(0, 32, 29, 5)$	\\
		$(0, 15, 7, 10)$	& $(0, 18, 13, 7)$	& $(0, 36, 25, 13)$	\\
		$(0, 15, 11, 6)$	& $(0, 19, 17, 4)$	& $(0, 36, 29, 9)$	\\
		$(0, 15, 13, 4)$	& $(0, 21, 19, 4)$	& $(0, 37, 29, 10)$	\\
		$(0, 16, 7, 11)$	& $(0, 27, 25, 4)$	& $(0, 38, 29, 11)$	\\
		$(0, 16, 9, 9)$		& $(0, 30, 13, 19)$	& 	\\
		$(0, 16, 11, 7)$	& $(0, 30, 25, 7)$	& 	\\
				%\caption{Possible irregularities and local solution dimensions for $G_2$-rigids}
	\end{tabular}
\end{center}
We would like to rule out further cases by computing the formal monodromy of the irregular formal type. For its definition in the general setting we refer to \cite[Section 1]{Mitschi96}. We will describe how to compute the formal monodromy of an elementary connection $\El(\rho,\varphi,R)$ where $\rho$ has degree $p$ and $R$ is a regular connection. We can choose a connection $R^{1/p}$ such that $\rho^+R^{1/p}\cong R$ (this boils down to choosing a $p$-th root of the monodromy associated to $R$). Now
\[\El(\rho,\varphi,R)=\rho_+(\EE^\varphi\otimes \rho^+R^{1/p})\cong \rho_+\EE^\varphi \otimes R^{1/p} \]
by the projection formula. Therefore by \cite[Lemma 2.4.]{Sa08}, 5, the differential equation associated to this elementary module has a formal solution of the form 
\[Y(t)=x^L e^{Q(t)} \]
where $x=t^p$, $Q(t)=\textup{diag}(\varphi(t), \varphi(\zeta_p t),...,\varphi(\zeta_p^{p-1}t))$ for a primitive $p$-th root of unity $\zeta_p$ and $L\in \textup{Mat}_n(\IC)$. The formal monodromy $A$ is defined such that $YA$ is the solution obtained by formal counter-clockwise continuation of $Y$ around $0$, see \cite[Chapter 3]{vdP03}.  \par
In the special case that $\varphi(t)=\alpha/t$ and $R$ is of rank one and corresponds to the monodromy $\lambda$, the formal monodromy is given as follows. Let $\lambda^{1/p}$ be a $p$-th root of $\lambda$ and choose $\mu$ such that $exp(2\pi i \mu)=\lambda^{1/p}$. The formal solution from above takes the form
\[Y(t)=x^\mu e^{Q(t)} \]
and the action of the formal monodromy sends $Y(t)$ to $\lambda^{1/p}x^\mu e^{\tilde{Q}(t)}$ where \[\tilde{Q}(t)=\textup{diag}(\varphi(\zeta_pt), \varphi(\zeta_p^2 t),...,\varphi(\zeta_p^{p-1}t),\varphi(t)).\]
Therefore in addition to multiplication by $\lambda^{1/p}$ the formal monodromy permutes the basis of the solution space, i.e. $A=\lambda^{1/p}P_p$ where $P_p$ denotes as before the cyclic permutation matrix. We will compute one example to show how to apply this discussion. \par
$\mathbf{(0, 16, 9, 9).}$ The formal type at the irregular singularity has to be of the form
\[\El(\rho_2,\alpha,R)\oplus (\J(2), 1)\]
or of the form 
\[\El(\rho_2,\alpha,R)\oplus (-E_2, 1)\]
where the connection $R$ corresponds to either $E_2$ or $-E_2$. In the first case we find that by the above discussion the formal monodromy is of the form $(E_2,-E_2,\J(2),1)$ or of the form $(iE_2,-iE_2,\J(2),1)$ both of which do not lie in $G_2(\IC)$. In the second case suppose that there exists a connection $\EE$ on $\Gm$ with the above formal type at $\infty$. The possibilities for the monodromy at $0$ are $(-\J(3),\J(3),-1)$, $(i\J(2),-i\J(2),-E_2,1)$ or $(x, -1, , -x, 1, -x^{-1},-1,x^{-1})$ where $x^4\neq 1$. In all these cases we compute 
\[\rk(\F(\EE\otimes \LL))=5\]
where $\LL$ is the rank one system with monodromy $-1$ at $0$ and $\infty$. But the formal type at $0$ of $\F(\EE\otimes \LL)$ would be of rank $7$. Therefore this case cannot occur.

All cases apart from the ones in the following list can be excluded by a combination of all the criteria we've used so far. We obtain constraints on the formal type at $\infty$ and can apply the Katz-Arinkin algorithm to obtain contradictions. 
\begin{center}
	\begin{tabular}{c}
		$r=2$ \\
		\hline \\
		$(0, 7, 7, 2)$ \\
		$(0, 14, 13, 3)$ \\
		$(0, 19, 17, 4)$ \\
		$(0, 21, 19, 4)$ \\
	\end{tabular}
\end{center}
Note that it might not suffice to simply apply one operation and compute the rank. We give an example of a case in which the computations are more complicated. \par 

$\mathbf{(0,38,29,11)}$. The monodromy at $0$ is $(\J(2),\J(2),E_3)$ and the formal type at $\infty$ has to be of the form
\[\El(\rho_1,\alpha,\lambda E_2)\oplus\El(\rho_1,-\alpha,\lambda^{-1} E_2)\oplus\El(\rho_1,2\alpha,\mu)\oplus\El(\rho_1,-2\alpha,\mu^{-1})\oplus (1).\]
Suppose there exists an irreducible connection $\EE$ on $\Gm$ with this formal type. We will apply Fourier transforms, twists and middle convolution to the connection $\EE$ to arrive at a contradiction. \par
Recall that $\F$ denotes the Fourier transform of connections and that $\MC_\chi$ is the middle convolution with respect to the Kummer sheaf $\K_\chi$. Let $\alpha_1,...,\alpha_r\in \IC^*$ such that $\alpha_1\cdot ...\cdot \alpha_r=1$. We denote by $\LL_{(\alpha_1,...,\alpha_{r+1})}$ the rank one connection on $\PP^1-\{x_1,...,x_r\}$ with monodromy $\alpha_i$ at $x_i$. For ease of notation we will write $(\alpha_1,...,\alpha_r)\otimes -$ for the twist $\LL_{(\alpha_1,...,\alpha_{r+1})}\otimes -$.  \par
We compute the change of local data in the following scheme in which we write the operation used in the first column and the formal type at the singularities in the other columns. \par
The way the data changes is given by the explicit stationary phase formula \ref{explicitstatphase} and Lemma \ref{Eltwist}. The $i$-th line is the result of applying the operation in the $(i-1)$-th line to the system in the $(i-1)$-th line. Writing $-$ in a column of a singularity means that this point is not singular.  \par
\medskip
\scalebox{0.6}{
	\begin{tabular}{c | c c c c c c} 
		& $0$ & $\alpha$ & $-\alpha$ & $2\alpha$ & $-2\alpha$ & $\infty$ \\ [5pt]
		\hline \\
		$\F$	& $(\J(2),\J(2),E_3)$ & $-$ & $-$ & $-$ & $-$ & \specialcell[c]{$\El(u,\alpha, \lambda E_2)\oplus\El(u,-\alpha, \lambda^{-1} E_2)$\\ $\oplus\, \El(u,2\alpha,\mu)\oplus\El(u,-2\alpha,\mu^{-1})\oplus (1)$} \\ [5pt]
		$(1,\lambda^{-1},\lambda,1,\mu,\mu^{-1})\otimes -$ & $\J(2)$ &$\lambda E_2$& $\lambda^{-1} E_2$& $(\mu,1)$ & $(\mu^{-1},1)$ & $E_2$ \\ [5pt]
		$\MC_{\mu^{-1}}$ & $\J(2)$ &$-$& $-$& $(\mu,1)$ & $(1,\mu)$ & $\mu^{-1} E_2$ \\ [5pt]
		& $\mu^{-1}$ & $-$ & $-$ & $\J(2)$ & $\J(2)$ & $\mu$ \\ [5pt]
	\end{tabular}}
	\medskip
	\par
In the last row we obtain a contradiction as the rank of the system is $1$, but its monodromy at $2\alpha$ resp. $-2\alpha$ is $\J(2)$. \par 
In the next section we will construct irreducible rigid $G_2$-connections in the four cases that are left which leads to the proof of the classification theorem for irreducible rigid irregular $G_2$-connections with slopes with numerator $1$.

\section{Proof of Theorem \ref{classif}} \label{proof}

We give the construction for the different cases. When varying the monodromy at zero in the same case, the construction is essentially the same up to twists with rank one systems. We will use the following notations. Denote by $\EE_{1,j}$ for $j=1,2,3$ the first three families, by $\EE_2$ the fourth family, by $\EE_3$ the fifth family and by $\EE_{4,j}$ for $j=1,...,5$ the final five families. Let $\mathscr{G}$ denote any operation on connections. We write $\mathscr{G}^k, k\in \Z_{>0}$, for its $k$-fold iteration. \\
\textbf{Construction of $\EE_{1,j}$.} Consider the connection 
\[\LL_{1,1}:=\LL_{(\lambda^{-1},-\lambda,\lambda^{-1},-\lambda)}\]
on $\PP^1-\{0,\frac{1}{4}\alpha_1^2, \alpha_1^2,\infty\}$ and the Möbius transform $\phi:\PP^1\rightarrow \PP^1, z\mapsto \frac{1}{z}$. Recall that $\F$ denotes the Fourier transform of connections. Our claim is that 
\[\EE_{1,1}:=\F(\phi^*(\F((1,-\lambda^{-1},1,-\lambda)\otimes \MC_{-\lambda}(\LL_{1,1}))))\]
has the formal type $(\J(3),\J(3),1)$ at $0$ and 
\[\El(2,\alpha_1,(\lambda,\lambda^{-1})) \oplus \El(2,2\alpha_1,1)\oplus (-1)\]
at $\infty$. Similar to before we compute the change of local data under the operations above in the following scheme.
\par
\medskip
\scalebox{0.7}{
\begin{tabular}{c | c c c c} 
			& $0$ & $\frac{1}{4}\alpha_1^2$ & $\alpha_1^2$ & $\infty$ \\ [5pt]
\hline \\
$\MC_{-\lambda}$	& $\lambda^{-1}$ & $-\lambda$ & $\lambda^{-1}$ & $-\lambda$\\ [5pt]
$(1,-\lambda^{-1},1,-\lambda)\otimes -$ & $(-1,1)$ &$(\lambda^2,1)$& $(-1,1)$& $-\lambda^{-1}E_2$ \\ [5pt]
$\F$ & $(-1,1)$ & $(-\lambda,-\lambda^{-1})$ & $(-1,1)$ & $E_2$ \\ [5pt]
$\phi^*$ & $(\J(2),\J(2))$ & $-$ & $-$ & \specialcell[c]{$\El(u,\frac{\alpha_1^2}{4u}, (-\lambda,-\lambda^{-1}))$ \\ $\oplus\, \El(u,\frac{\alpha_1^2}{u},-1)\oplus (-1)$} \\ [5pt] 
$\F$ & \specialcell[c]{$\El(u,\frac{\alpha_1^2}{4u}, (-\lambda,-\lambda^{-1}))$ \\ $\oplus\, \El(u,\frac{\alpha_1^2}{u},-1)\oplus (-1)$} 
&$-$&$-$& $(\J(2),\J(2))$  \\ [5pt]
 & $(\J(3),\J(3),1)$ & $-$ & $-$ & \specialcell[c]{$\El(\frac{4}{\alpha_1^2}u^2,\frac{\alpha_1^2}{2u}, (\lambda,\lambda^{-1}))$ \\ $\oplus\, \El(\frac{1}{\alpha_1^2}u^2,\frac{2\alpha_1^2}{u},1)\oplus (-1)$}
\end{tabular}}

\medskip \noindent
By Proposition \cite[Corollary 2.7.]{Sa08}, the connection 
\[\El(\frac{4}{\alpha_1^2}u^2,\frac{\alpha_1^2}{2u}, (\lambda,\lambda^{-1}))\oplus \El(\frac{1}{\alpha_1^2}u^2,\frac{2\alpha_1^2}{u},1)\oplus (-1)\]
is isomorphic to 
\[\El( 2,\alpha_1,(\lambda,\lambda^{-1})) \oplus \El(2,2\alpha_1,1)\oplus (-1). \]
This proves existence of the first type of connection. The same type of calculation shows that the connection 
\[
\EE_{1,2}:=\F\left((-1,-1)\otimes \phi^*\left(\F\left((1,\lambda^{-1},1,\lambda)\otimes\MC_\lambda\left(\LL_{(\lambda^{-1},\lambda,\lambda^{-1},\lambda)}\right)\right)\right)\right)
\]
exhibits the second formal type and the connection
\[\EE_{1,3}:=\F\left((x,x^{-1})\otimes \phi^*\left(\F\left((1,-\lambda^{-1}x^{-1},1,-\lambda x)\otimes \MC_{-\lambda x^{-1}}\left(\LL_{(\lambda^{-1},-\lambda x,\lambda^{-1},-\lambda x^{-1})}\right)\right)\right)\right)
\]
exhibits the third formal type. 
\\
\textbf{Construction of $\EE_2$.} For the second formal type at infinity, consider the connection $\LL_2:=\LL_{(-1,-1,-1,-1,1)}$ on $\PP^1-\{0,\frac{1}{4}\alpha_1^2,\frac{1}{4}\alpha_2^2,\frac{1}{4}(\alpha_1+\alpha_2)^2,\infty \}$. The connection 
\[\EE_2:=\F(\phi^*\F(\LL_2)))\]
has the desired formal type $(\J(3),\J(2), \J(2))$ at $0$ and 
\[\El(2,\alpha_1,1) \oplus \El(2,\alpha_2,1)\oplus\El(2,\alpha_1+\alpha_2,1) \oplus (-1)\]
at $\infty$. The computation works the same way as before. \\
\textbf{Construction of $\EE_3$.} For the third type consider the connection 
\[\LL_3:= \LL_{(-i,-\lambda,-\lambda^{-1},i)}\]
on $\PP^1-\{0,\frac{1}{27}\alpha_1^3,-\frac{1}{27}\alpha_1^3,\infty \}$. The system
\[\EE_4:=\F(\phi^*((-1,-1)\otimes \F((i,-i)\otimes \phi^*(\F((i,1,1,-i)\otimes \MC_i(\LL_3)))))) \]
has the required formal type. \\
\textbf{Construction of $\EE_{4,j}$.} For the final type we consider $\PP^1-\{0,\frac{1}{6^6}\alpha_1^6,\infty \}$. The formal types are then exhibited (in the order that they appear in the list) by the connections
\begin{align*}\EE_{4,1}&=\F\left((\phi^*\circ \F)^5\left(\LL_{(-1,-1,1)}\right)\right) \\
\EE_{4,2}&=\F\left((\varepsilon,\varepsilon^{-1})\otimes (\phi^*\circ\F)^3\left((\varepsilon^{-2},\varepsilon^2)\otimes(\phi^*\circ\F)^2\left(\LL_{(-\varepsilon,-\varepsilon^2,1)}\right)\right)\right), \\
\EE_{4,3}&=\F((z^{-2},z^2)\otimes (\phi^*\circ\F(
           (z^4,z^{-4})\otimes (\phi^*\circ\F) \\
&( (z,z^{-1})\otimes
           (\phi^*\circ \F)^2( (z^2,z^{-2})\otimes (\phi^*\circ \F)(\LL_{(-z^{-1},-z,1)}))))\\
\EE_{4,4}&=\F\left((\phi^*\circ \F)^2\left((x,x^{-1})\otimes (\phi^*\circ\F)^2\left((x^{-2},x^2)\otimes(\phi^*\circ\F)\left(\LL_{(-x,-x^{-1},1)}\right)\right)\right)\right), \\
\EE_{4,5}&=\F((x,x^{-1})\otimes(\phi^*\circ\F)((x^{-2},x^2)\otimes(\phi^*\circ\F)\\ 
&((xy^{-1},x^{-1}y)\otimes (\phi^*\circ\F)((y^{-2},y^2)\otimes (\phi^*\circ\F) \\
&((x,x^{-1})\otimes (\phi^*\circ\F)(\LL_{(-(xy)^{-1},-(xy)^{-1},x^2y^2})))))). \end{align*}
\textbf{The differential Galois groups.}
We compute the differential Galois group $G$ of the above types using an argument of Katz from \cite[§4.1.]{Ka90}. Let $\EE_1:=\EE_{1,1}$ and $\EE_4:=\EE_{4,1}$. The following proof works the same for all $\EE_{1,j}, j=1,2,3$. Note that all formal types are self-dual. Thus for $i=1,...,4$ we have that 
\[\Psi_x(\EE_i)\cong \Psi_x(\EE_i^*)\]
for $x=0,\infty$ and by rigidity we get $\EE_i\cong \EE_i^*$, i.e. all the above systems are globally self-dual. In addition the determinants are trivial meaning that actually $G\subset SO(7)$. We will focus first on the cases $i=1,2,3$. Let $G^0$ denote the identity component of $G$. By the proof of \cite[25.2]{Ka12} there are now only three possibilities for $G^0$ which are $SO(7)$, $G_2$ or $SL(2)/\pm 1$. Since all these groups are their own normalizers in $SO(7)$ in all cases we find that $G=G^0$. We now only have to exclude the cases $G=SO(7)$ and $G=SL(2)/\pm 1$. First suppose that $G=SL(2)/\pm 1$. \\
The group $SL(2)/\pm 1\cong SO(3)$ admits a faithful 3-dimensional representation 
\[\rho:SO(3)\rightarrow \GL(V).\] Let $\rho(\EE_i)$ be the connection associated to the representation 
\[\pidiff(\Gm,1)\rightarrow SO(3)\rightarrow \GL(V). \]
The connection $\rho(\EE_i)$ is a $3$-dimensional irreducible connection with slopes $\le 1/2$ at $\infty$ and which is regular singular at $0$. We have $\irr(\rho(\EE_i))\le 3/2$ and so either $\irr_\infty(\rho(\EE_i))=0$ or $\irr_\infty(\rho(\EE_i)))=1$. In the first case we have 
\[\rig(\rho(\EE_i))=\dim (\END(\rho(\EE_i))^{I_0})+\dim (\END(\rho(\EE_i))^{I_\infty})\ge 6\]
 which is a contradiction (recall that for any irreducible connection $\EE$ on some open subset $U$ of $\PP^1$ we always have $\rig(\EE)\le 2$). \\
In the second case, the formal type at $\infty$ of $\rho(\EE_i)$ has to be of the form 
\[\El(2,\alpha,1)\oplus (-1) \]
and we compute
\[\rig(\rho(\EE_i))=\dim \End(\rho(\EE_i))^{I_0}+2-1\ge 4 \]
which again yields a contradiction. \\
Now we're left with the cases $G=SO(7)$ and $G=G_2$. Recall that the third exterior power of the standard representation of $SO(7)$ is irreducible, so it suffices to prove that $G$ has a non-zero invariant in the third exterior power of its $7$-dimensional standard representation. This corresponds to the alternating Dickson trilinear form which is stabilized by $G_2$. In our case this amounts to finding horizontal sections of $\Lambda^3\EE_i$ for $i=1,2,3$, i.e. we have to show that $H^0(\Gm, \Lambda^3\EE_i)\neq 0$ or equivalently by duality that $H^2_c(\Gm, \Lambda^3 \EE_i)\neq 0$.
For this it suffices to prove that 
\[\chi(\PP^1,j_{!*}\Lambda^3\EE_i)>0.\]
Recall that 
\[\chi(\PP^1,j_{!*}\Lambda^3\EE_i)=\dim(\Lambda^3\EE_i)^{I_0}+\dim(\Lambda^3\EE_i)^{I_\infty}-\irr_\infty(\Lambda^3\EE_i)\]
as $0$ is a regular singularity. These invariants can be computed using Sabbah's formula for the determinant of elementary connections in \cite[Proposition 2.9]{Sa08}. For $i=1$, we have 
\begin{align*} &\, \Lambda^3(\El(2,\alpha_1,\lambda)\oplus\El(2,\alpha_1,\lambda^{-1})\oplus(\El(2,2\alpha_1,1)\oplus(-1)) \\
&=(\El(2,\alpha_1,\lambda)\otimes \det\El(2,\alpha_1,\lambda^{-1})) \oplus (\det\El(2,\alpha_1,\lambda)\otimes\El(2,\alpha_1,\lambda^{-1})) \\
&\oplus (\det\El(2,\alpha_1,\lambda^{-1})\oplus(\El(2,\alpha_1,\lambda)\otimes\El(2,\alpha_1,\lambda^{-1}))\oplus\det\El(2,\alpha_1,\lambda)) \\
&\otimes ((-1)\oplus\El(2,2\alpha_1,1)) \\
&\oplus(\El(2,\alpha_1,\lambda^{-1})\oplus\El(2,\alpha_1,\lambda))\otimes ((\El(2,2\alpha_1,1)\otimes (-1))\oplus \det(\El(2,2\alpha_1,1)) \\
&\oplus(\det\El(2,2\alpha_1,1)\otimes (-1))
\end{align*}
As the slopes in our case are of the form $1/p$ with $p>1$ all occuring determinant connections are regular. Therefore the irregularity of this connection is $13$. 
Since \[\det\El(2,2\alpha_1,1)\otimes (-1) \cong (-1)\otimes (-1) \cong (1)\] by \cite[Proposition 2.9]{Sa08} we also have $\dim(\Lambda^3\EE_1)^{I_\infty}\ge 1$. Finally we find that 
\[\chi(\PP^1,j_{!*}\Lambda^3\EE_1)=13+\dim(\Lambda^3\EE_1)^{I_\infty}-13\ge 1.\]
The second and thirds cases are completely analogous and we have 
\[\chi(\PP^1,j_{!*}\Lambda^3\EE_2)=13+4-15=2\]
and
\[\chi(\PP^1,j_{!*}\Lambda^3\EE_3)=9+\dim(\Lambda^3\EE_3)^{I_\infty}-\irr_\infty(\Lambda^3\EE_3)\ge 9+2-10=1. \]
Therefore for $i=1,2$ we have $\Gdiff(\EE_i)= G_2$. \par
For the systems with formal type $\El(6,\alpha_1, 1)\oplus(-1)$ at $\infty$ note that the systems in question have Euler characteristic $-1$ on $\Gm$ and therefore are hypergeometric by \cite[Theorem 3.7.1]{Ka90}. By \cite[4.1.]{Ka90} all these systems have differential Galois group $G_2$. \\
\textbf{The above list exhausts all cases.} Let $\EE$ be an irreducible irregular rigid $G_2$-connection, i.e. at some singularity the irregularity of $\EE$ is positive. By the rough classification of Section \ref{G2conn}, the only possibilities for $R(\EE)$ are 
\begin{align*}
&(0, 7, 7, 2), \\
&(0, 14, 13, 3), \\
&(0, 19, 17, 4) \ \textup{or} \\
&(0, 21, 19, 4). 
\end{align*}
Applying the same techniques as before, the only formal types left are those appearing in the above list together with one additional formal type which is given by the following table (here $\varepsilon$ denotes a primitive third root of unity).	
\begin{center}
\begin{tabular}{ c c }
	$0$ & $\infty$ \\
	\hline \\
	$(\varepsilon E_3,\varepsilon^{-1} E_3,1)$ & \specialcell[c]{$\El(2,\alpha_1,1) \oplus \El(2,\alpha_2,1)$ \\ $\oplus\,\El(2,\alpha_1+\alpha_2,1) \oplus (-1)$}
\end{tabular}
\end{center}
\medskip
The connection 
\[\EE:=\F((\varepsilon,\varepsilon^{-1})\otimes \phi^*(\F((\varepsilon^{-1},1,1,1,\varepsilon)\otimes\MC_{\varepsilon^{-1}}(\LL_5)))) \]
constructed from the rank one sheaf $\LL_5:=\LL_{(-\varepsilon,-1,-1,-1,\varepsilon^{-1})}$ on \[\PP^1-\{0,\frac{1}{4}\alpha_1^2,\frac{1}{4}\alpha_2^2,\frac{1}{4}(\alpha_1+\alpha_2)^2,\infty \}\]
has the above formal type. We will prove by contradiction that $\Gdiff(\EE)$ is not contained in $G_2$. Therefore suppose the contrary, i.e. $\Gdiff(\EE)\subset G_2$. As we have seen before, the morphism
\[\pidiff(\Gm,1)\rightarrow \GL_7(\IC)\]
corresponding to $\EE$ factors through $G_2(\IC)$. Denote by $\Ad$ the adjoint representation $\Ad:G_2\rightarrow \mathfrak{g}_2$. 
As $\EE$ is rigid and irreducible by construction, we find that
\[H^1(\PP^1,j_{!*}\Ad(\EE))=0\]
by \cite[Section 7]{Fr09}. We therefore have
\[0=\dim H^1(\PP^1,j_{!*}\Ad(\EE))=\irr_\infty(\Ad(\EE))-\dim \Ad(\EE)^{I_\infty}-\dim \Ad(\EE)^{I_0} \]
and the same for the connection $\EE_2$ we have constructed above. As the formal type at $\infty$ of $\EE$ and $\EE_2$ coincides, we find that 
\[\irr_\infty(\Ad(\EE))-\dim \Ad(\EE)^{I_\infty}=\irr_\infty(\Ad(\EE_2))-\dim \Ad(\EE_2)^{I_\infty}\]
and in particular a necessary condition for both connections to have differential Galois group $G_2$ is 
\[\dim\Ad(\EE)^{I_0}=\dim \Ad(\EE_2)^{I_0}.\]
These invariants are precisely the centraliser dimension of the local monodromy at $0$ of the connections in question. By Table 1 in \cite{Dett10}, $\dim \Ad(\EE_2)^{I_0}=6$ and $\dim\Ad(\EE)^{I_0}=8$ which yields a contradiction. Hence $\Gdiff(\EE)$ is not contained in $G_2$, concluding the proof. 

\begin{rem}

Let $\EE_{4,5}$ be the final system in the theorem with $x=\zeta_8$ a primitive $8$-th root of unity and $y=\zeta_8^2$ and denote by $[q]:\Gm\rightarrow \Gm$ the morphism given by $z\mapsto z^q$. In this setting we find that 
\[\EE_3\cong [2]^*\EE_{4,5}. \]
To see this we compute the pullback of the formal types. At the regular singularity, the pullback of the connection with monodromy $(\zeta_8,\zeta_8^2,\zeta_8^3,\zeta_8^5,\zeta_8^6,\zeta_8^7,1)$ has monodromy $(iE_2,-iE_2,-E_2,1)$. The pullback of $\El(6,\alpha_1, 1)\oplus(-1)$ is given due to \cite[2.5 \& 2.6]{Sa08} as
\[\El(3,\alpha,1)\oplus\El(3,\zeta_6^5\alpha,1)\oplus (1)\cong \El(3,\alpha,1)\oplus\El(3,-\alpha,1)\oplus (1),\]
since $\zeta_6^5\alpha=-\zeta_3^2\alpha$ and we can multiply by $\zeta_3$ to get $-\alpha$. By rigidity we get the desired isomorphism $\EE_3\cong [2]^*\EE_{4,5}$. \par
A similar analysis shows that systems in the second family $\EE_2$ with formal type \[\El(2,-\alpha_1,1)\oplus\El(2,\zeta_6^5\alpha_1,1)\oplus\El(2,\zeta_6^4,1)\oplus(-1)\]
at $\infty$ are pullbacks of the system $\EE_{4,4}$, the second to last system of the theorem, with $x=\zeta_3$ under the map $[3]:\Gm\rightarrow\Gm$. Of course not every system in the family $\EE_2$ is of this form and if they are not, they cannot be pullbacks of hypergeometrics (these would have to appear in the above list). 
\end{rem}
\bibliographystyle{jk}
\bibliography{mybib}

\end{document}